\newtheorem{theorem}             {Theorem}  [section]
\newtheorem{lemma}      [theorem]{Lemma}
\newtheorem{corollary}  [theorem]{Corollary}
\newtheorem{proposition}[theorem]{Proposition}
\numberwithin{equation}{section} \everymath{\displaystyle}
\newcommand{\Sch}{\mathcal{S}}
\newcommand{\sgn}{{\rm sgn}}
\newcommand{\ep}{\varepsilon}
\newcommand{\gp}[1]{\mathbf{#1}}
\newcommand{\GL}{{\rm GL}}
\newcommand{\SL}{{\rm SL}}
\newcommand{\Z}{\mathbb{Z}}
\newcommand{\Mat}{{\rm M}}
\newcommand{\lcm}{{\rm lcm}}
\newcommand{\legendre}[2]{\left( \tfrac{#1}{#2} \right)}
\newcommand{\Q}{\mathbb{Q}}
\newcommand{\C}{\mathbb{C}}
\newcommand{\F}{\mathbf{F}}
\newcommand{\A}{\mathbb{A}}
\newcommand{\vp}{\mathfrak{p}}
\newcommand{\norm}[1][\cdot]{\lvert #1 \rvert}
\newcommand{\extnorm}[1]{\left\lvert #1 \right\rvert}
\newcommand{\Norm}[1][\cdot]{\lVert #1 \rVert}
\newcommand{\rpL}{{\rm L}}
\newcommand{\rpR}{{\rm R}}
\newcommand{\Bas}{\mathcal{B}}
\newcommand{\Res}{{\rm Res}}
\newcommand{\cond}{\mathfrak{c}}
\newcommand{\Zeta}{\mathrm{Z}}
\newcommand{\Vol}{{\rm Vol}}
\newcommand{\Rmnum}[1]{\expandafter\@slowromancap\romannumeral #1@}
\newcommand{\id}{\mathbbm{1}}
\newcommand{\psmatrix}[4]{\left(\begin{smallmatrix}#1&#2\\#3&#4\end{smallmatrix}\right)}
\title{Hybrid subconvexity and the partition function}
\author{Nickolas Andersen and Han Wu}
\begin{document}

\begin{abstract}
	We give an upper bound for the error term in the Hardy-Ramanujan-Rademacher formula for the partition function.
	The main input is a new hybrid subconvexity bound for the central value $L(\tfrac 12,f\times (\tfrac{q}{\cdot}))$ in the $q$ and spectral parameter aspects, where $f$ is a Hecke-Maass cusp form for $\Gamma_0(N)$ and $q$ is a fundamental discriminant.
\end{abstract}

	\maketitle

\section{Introduction}

Of the many fruits of the famous collaboration of Hardy and Ramanujan, the invention of the circle method is  among the most lasting and influential.
Their first application of the circle method  was the discovery in 1918 of an asymptotic formula for the partition function $p(n)$ which is precise enough to compute the exact value of $p(n)$ when $n$ is sufficiently large.
The Hardy-Ramanujan formula is, in the notation of \cite{HR},
\begin{equation} \label{eq:HR-formula}
	p(n) = \sum_{q=1}^\nu A_q(n)\phi_q(n) + O_\alpha(n^{-1/4}), \qquad \nu = \lfloor \alpha\sqrt{n} \rfloor,
\end{equation}
where $\alpha$ is any fixed positive real number, $A_q(n)$ is a finite sum of $24q$-th roots of unity, and
\begin{equation}
	\phi_q(n) = \frac{\sqrt q}{2\pi\sqrt{2}} \frac{d}{dn} \left( \frac{\exp(C\lambda_n/q)}{\lambda_n} \right), \qquad C=\pi\sqrt{\tfrac 23}, \quad \lambda_n=\sqrt{n-\tfrac 1{24}}.
\end{equation}
Since $p(n)$ is an integer and the error term tends to zero as $n\to\infty$, the exact value of $p(n)$ is the nearest integer to the sum in \eqref{eq:HR-formula} for all sufficiently large $n$.
However, the series obtained by replacing $\nu$ by $\infty$ in \eqref{eq:HR-formula} diverges
because $\phi_q(n)\gg_n \sqrt q$ and for each $n$ there are infinitely many $q$ for which $A_q(n)\gg \sqrt q$ (see \cite{lehmer1} for the latter fact).

In 1936, by carefully refining the contour used in the Hardy-Ramanujan circle method, Rademacher \cite{rademacher} showed that a slight modification of the sum in \eqref{eq:HR-formula} leads to an absolutely convergent infinite series whose value equals $p(n)$.
In fact, the only modification required is replacing the exponential function in the definition of $\phi_q(n)$ by the hyperbolic sine function, that is,
\begin{equation} \label{eq:HRR-formula}
	p(n) = \sum_{q=1}^\infty A_q(n)\tilde\phi_q(n), \qquad \tilde\phi_q(n) = \frac{\sqrt q}{\pi\sqrt{2}} \frac{d}{dn} \left( \frac{\sinh(C\lambda_n/q)}{\lambda_n} \right).
\end{equation}

Suppose that we truncate Rademacher's series at $q=\nu$, as in \eqref{eq:HR-formula}, and write
\begin{equation}
	p(n) = \sum_{q=1}^\nu A_q(n)\tilde\phi_q(n) + R(n,\nu).
\end{equation}
How fast does $R(n,\nu)$ decay as $n\to\infty$?
Since $\tilde\phi_q(n)$ is exponentially large when $q\ll \sqrt n$, it makes sense to set $\nu=\lfloor \alpha \sqrt{n} \rfloor$ for some $\alpha>0$, as in \eqref{eq:HR-formula}.
Rademacher showed in \cite{rademacher} that $R(n,\alpha\sqrt n)\ll_\alpha n^{-1/4}$, which matches the error term in \eqref{eq:HR-formula}.
It is apparent that any improvement to this bound requires a careful study of the $A_q(n)$, which are generalized Kloosterman sums given by
\begin{equation}
	A_q(n) = \sqrt{-i}\sum_{\substack{d\bmod q \\ (d,q)=1}} \bar\nu_\eta\left(\!\begin{pmatrix} a & \ast \\ q & d \end{pmatrix}\!\right) e\left(\frac{a+d}{24q} - \frac{nd}{q}\right),
\end{equation}
where $ad\equiv 1\pmod{q}$, $e(x)=e^{2\pi i x}$, and $\nu_\eta:\SL_2(\Z)\to\C$ is the multiplier system for the Dedekind eta function given by
\begin{equation}
    \nu_\eta\left(\!\begin{pmatrix} a & b \\ c & d \end{pmatrix}\!\right) = \frac{\eta(\frac{az+b}{cz+d})}{\sqrt{cz+d}\,\eta(z)}, \qquad \eta(z) = e(\tfrac{1}{24}z)\prod_{k=1}^\infty (1-e(kz)).
\end{equation}

A priori, $|A_q(n)|\leq q$, which is enough to show that the series \eqref{eq:HRR-formula} converges absolutely since we have $\tilde\phi_q(n)\ll q^{-5/2}$ for $q\gg \sqrt n$.
In 1937, Lehmer \cite{lehmer1} showed that 
\begin{equation} \label{eq:lehmer-weil-bound}
|A_q(n)| < 2^{\omega_o(q)}\sqrt q, 
\end{equation}
where $\omega_o(q)$ is the number of distinct odd primes dividing $q$.
He concluded \cite{lehmer2} that 
\[
    R(n,\alpha\sqrt n)\ll_{\alpha} n^{-1/2}\log n.
\]
This upper bound is essentially best possible if one estimates the sum of the absolute values of the terms in $R(n,\nu)$.
Any improvement must take into account the sign changes of the $A_q(n)$.
Lehmer's record held until 2010, when Folsom and Masri \cite{FM} broke the $-1/2$ exponent barrier when $24n-1$ is squarefree.
This exponent was improved in 2015 by the first author and Ahlgren \cite{AA}, then again in 2018 by Ahlgren and Dunn \cite{AD}, who proved that for all $n$ and for all $\ep>0$ we have
\begin{equation}
	R(n,\alpha\sqrt n) \ll_{\alpha,\ep} n^{-1/2-1/147+\ep}. 
\end{equation}
The main result of this paper is an improvement on this upper bound.

\begin{theorem}\label{thm:main} \label{thm:partition}
	Write $24n-1=tw^2$, where $t$ is squarefree.
	Then
	\begin{equation}
		R(n,\alpha\sqrt n) \ll_{\alpha,\ep} t^{-1/36}w^{-1/6}n^{-1/2+\ep}.
	\end{equation}
\end{theorem}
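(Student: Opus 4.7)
The starting point is Rademacher's exact formula \eqref{eq:HRR-formula}, from which
\[ R(n,\alpha\sqrt n) = \sum_{q > \alpha\sqrt n} A_q(n)\tilde\phi_q(n). \]
After dyadic decomposition and a smooth partition of unity, the problem reduces to estimating smoothly weighted sums $S(Q) := \sum_q A_q(n) g(q)$ for $g$ supported in $q \asymp Q \geq \alpha\sqrt n$. The trivial input $|A_q(n)| \ll q^{1/2+\ep}$ (Lehmer's bound \eqref{eq:lehmer-weil-bound}) only recovers $R \ll n^{-1/2+\ep}$, so cancellation among the $A_q(n)$ must be exploited. I would feed $S(Q)$ into the Kuznetsov trace formula for half-integral weight forms on $\SL_2(\Z)$ with the eta multiplier $\nu_\eta$ (Proskurin's formula), which expresses $S(Q)$ as a spectral average over weight-$\tfrac 12$ Maass cusp forms $\phi_j$ and the associated continuous spectrum, weighted by Fourier coefficients $\rho_j(n)$ at infinity and by an integral transform of $g$.

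\textbf{$L$-values and subconvexity.} The next step is to apply the Katok-Sarnak/Baruch-Mao identity, which expresses $|\rho_j(n)|^2$, up to harmless factors, as
\[ |\rho_j(n)|^2 \asymp \frac{L(\tfrac 12, F_j \otimes \chi_{-t})}{\sqrt{|t|}\,L(1,\mathrm{sym}^2 F_j)}, \]
where $F_j$ is the Shimura lift of $\phi_j$, a Hecke-Maass cusp form on $\Gamma_0(N)$ for a fixed level $N$, and $\chi_{-t}$ is the Kronecker symbol of the fundamental discriminant $-t$ coming from $24n-1 = tw^2$. The squareful factor $w$ enters through Hecke-multiplicative identities linking $\rho_j(n)$ to $\rho_j(t)$ via the Shimura Hecke eigenvalues at divisors of $w$. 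At this point I would invoke the hybrid subconvex bound advertised in the abstract, which in the relevant regime should take the form $L(\tfrac 12, F_j \otimes \chi_{-t}) \ll t^{1/2 - 1/18}(1+|t_{F_j}|)^{A}$, translating into the square-root saving $|\rho_j(n)| \ll t^{-1/36}$ on the spectral side. Combining this with the spectral large sieve of Deshouillers-Iwaniec to control the sum over the spectrum, and the best available bound toward Ramanujan (Kim-Sarnak) on the Shimura lift to handle the $w$-aspect, should produce the claimed exponents after a final dyadic optimization in $Q$.

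\textbf{Main obstacle.} The dominant difficulty is the hybrid subconvex bound itself, which simultaneously saves in the conductor aspect (twist by $\chi_{-t}$) and the spectral aspect (analytic conductor of $F_j$). Either aspect in isolation is known (Blomer-Harcos, Young, Blomer-Harcos-Milićević), but interpolating between them while retaining subconvex savings uniformly when one parameter dominates is delicate: a spectral-aspect argument wants to bucket by spectral parameter whereas a $q$-aspect argument wants to average over $q$ or localize in $q$ without loss. The standard approach is to open the $L$-value via an approximate functional equation, apply Voronoi summation on the $F_j$-side, and close with an amplified second-moment estimate combined with the spectral large sieve, tuning the amplifier to preserve the hybrid saving. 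A secondary but nontrivial point is to extract the $w^{-1/6}$ factor cleanly, which demands care with the Hecke relations and with the non-Ramanujan losses at ramified primes of $t$.
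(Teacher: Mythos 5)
Your skeleton (Kuznetsov with the eta multiplier, then a Waldspurger-type identity, then hybrid subconvexity, then optimization against Lehmer's bound) matches the paper's, but two of the key steps as you propose them would not go through. First, you cannot simply invoke Katok--Sarnak/Baruch--Mao here: the forms in question are weight $\tfrac12$ Maass forms on $\SL_2(\Z)$ with the \emph{eta multiplier} $\nu_\eta$ (coefficients supported on $n\equiv 1\pmod{24}$), which is not a congruence character, and no off-the-shelf Shimura/Waldspurger theorem covers this setting for negative discriminants. A substantial part of the paper is precisely to construct the Shimura lift $S_d(u_j)$ for negative fundamental $d\equiv 1\pmod{24}$, to prove it lands in $\mathcal S_0(6,\chi_0,2r_j)$ with Atkin--Lehner eigenvalues $-1$ under $W_2,W_3$ (Proposition~\ref{prop:shimura-maass}), and to prove the exact Waldspurger-type formula \eqref{eq:sum-walds} relating $\sum_{S_d(u_j)=\varphi}|\mu_j(d)|^2$ to $L(\tfrac12,\varphi\times\chi_d)$ (Proposition~\ref{prop:sum-walds}); this is done by computing cycle integrals of Poincar\'e series over geodesics attached to quadratic forms of square discriminant, following Duke--Imamo\u{g}lu--T\'oth and the first author's earlier work. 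Your proposal treats this step as known, which is a genuine gap.

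Second, your subconvexity input is both methodologically and numerically insufficient. The paper's Theorem~\ref{thm:subconvexity} is a \emph{cubic moment} bound obtained from a Motohashi-type formula (following \cite{Wu11}, in the spirit of Conrey--Iwaniec/Young), exploiting positivity of the cubic moment with no amplifier; this yields the Weyl-quality exponent $\left(|r|\lcm(|d|,N)\right)^{1/3}$. An amplified second moment with Voronoi and the spectral large sieve, as you sketch, would at best give Burgess-quality savings in the $d$-aspect, and the bound you actually write down, $L(\tfrac12,F_j\otimes\chi_{-t})\ll t^{1/2-1/18}$, is weaker still. Moreover your exponent accounting is wrong: the final exponent $t^{-1/36}$ is \emph{not} the pointwise saving in $|\rho_j(n)|$; it emerges from averaging over the spectrum (the sums $\mathcal M_1,\mathcal M_2,\mathcal M_3$ with $r_j$ up to $\asymp n^{1/4}$, controlled by \eqref{eq:mean-val-estimate} and \eqref{eq:waldspurger-and-subconvexity}) and from balancing against Lehmer's bound on the range $c\leq |d|^{\alpha}w^{\beta}$. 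Running the paper's optimization with a $d$-aspect exponent $\delta$ in place of $\tfrac13$ gives a saving of roughly $t^{(1+\delta)/6-1/4}$, so $\delta=\tfrac13$ yields $t^{-1/36}$ while your $\delta=\tfrac49$ would only give about $t^{-1/108}$ (and Burgess $\delta=\tfrac38$ only $t^{-1/48}$). The $w$-aspect treatment via Hecke relations for the Shimura lift and Kim--Sarnak is consistent with the paper, but without the eta-multiplier Waldspurger formula and the Weyl-strength hybrid bound in both $|d|$ and $r$, the stated exponents $t^{-1/36}w^{-1/6}$ are not reached.
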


The upper bound in Theorem~\ref{thm:main} follows from an estimate (see \eqref{eq:sum-acn-main} below) for the weighted sums of Kloosterman sums
\begin{equation} \label{eq:Aqn-sum}
    \sum_{q\leq x} \frac{A_q(n)}{q}.
\end{equation}
Lehmer's inequality \eqref{eq:lehmer-weil-bound} yields the trivial upper bound $\sqrt x\log x$ for the sum \eqref{eq:Aqn-sum}.
We will give an improved bound in the critical range $x\asymp \sqrt n$.
In Section~\ref{sec:kloo-coeffs} we relate, via a Kuznetsov trace formula, the sum \eqref{eq:Aqn-sum} to a sum of coefficients of Maass cusp forms $u_j$ of half-integral weight whose transformation law involves the eta multiplier $\nu_\eta$.
This closely follows the arguments in \cite{AA} and \cite{AD}.
We depart from the arguments in \cite{AA} and \cite{AD}
at Proposition~\ref{prop:sum-walds} below, which
is a Waldspurger-type formula relating the cusp form coefficient $\mu_j(d)$ to the central value $L(\tfrac 12,\varphi\times\chi_d)$, where $\varphi$ is the Shimura lift of $u_j$ associated to the discriminant $d$ (see Section~\ref{sec:shimura} for notation and details).
Then in Section~\ref{sec:subconvexity}, using a version of Motohashi's formula proved by the second author in \cite{Wu11}, we prove the hybrid subconvexity bound
\begin{equation} \label{eq:sub-intro} L \left( \tfrac{1}{2}, \varphi \times \chi_d \right) \ll_{\ep} (|r| \norm[d] N)^{\ep} \left( |r| \lcm(\norm[d],N) \right)^{\frac{1}{3}}, \end{equation}
where $N$ is the level of $\varphi$ and $r$ is the spectral parameter of $\varphi$.
This generalizes a result of Young \cite{Yo17} to arbitrary level $N$.
(Note that \eqref{eq:sub-intro} is not subconvex in the $N$ aspect, but it is in the $d$ and $r$ aspects.)
This leads to our bound \eqref{eq:sum-acn-main} 
from which Theorem~\ref{thm:main} follows quickly.

\section{Sums of Kloosterman sums}

\label{sec:kloo-coeffs}

In this section we outline the proof of Theorem~\ref{thm:main}, postponing many of the details to later sections.
In what follows, we will use the same setup and notation as \cite{AA}, with the notable exception that we are normalizing the Maass form coefficients differently (see \eqref{eq:uj-normalize} below); for more details, consult that paper, especially Section~2.
Through the rest of the paper, we will encounter the Bessel functions $I_\nu$, $J_\nu$, and $K_\nu$, and the Whittaker functions $M_{\mu,\nu}$ and $W_{\mu,\nu}$. 
Definitions and properties of these functions can be found in Sections~10  and 13 of \cite{dlmf}.

Let $\mathcal S_{k}(N,\nu,r)$ denote the vector space of Maass cusp forms of weight $k$ on $\Gamma_0(N)$ with multiplier system $\nu$ and spectral parameter $r$.
Each function $u\in \mathcal S_{k}(N,\nu,r)$ satisfies
\begin{equation}
    u\left(\frac{az+b}{cz+d}\right) = \nu(\gamma) \left(\frac{cz+d}{|cz+d|}\right)^{k} u(z) \qquad \text{ for all }\gamma = \left(\begin{matrix}a&b\\c&d\end{matrix}\right) \in \Gamma_0(N)\\
\end{equation}
and
\begin{equation}
    -\Delta_{k} u := \left(-y^2\left(\partial_x^2+\partial_y^2\right) + ik y \partial_x\right) u = \left(\tfrac 14+r^2\right)u
\end{equation}
and vanishes at the cusps of $\Gamma_0(N)$.
We will be primarily interested in the space $\mathcal S_{1/2}(1,\nu_\eta,r)$.
It is shown in Section~5 of \cite{AA} that $\mathcal S_{1/2}(1,\nu_\eta,r)=\{0\}$ unless $r=i/4$ or $r> 0$.
Let $\mathcal S_{1/2}^\ast(1,\nu_\eta)$ denote the span of the union of all of the spaces $\mathcal S_{1/2}(1,\nu_\eta,r)$ for $r> 0$ and let $\{u_j\}_{j\geq 1}$ denote an orthonormal basis for $\mathcal S_{1/2}^\ast(1,\nu_\eta)$ with $r_j$ the spectral parameter of $u_j$.
For convenience in working with the Hecke operators, we will normalize the Fourier coefficients of $u_j$ by
\begin{equation} \label{eq:uj-normalize}
    u_j(z) = \sum_{n\equiv 1(24)} \mu_j(n) W_{\frac 14 \sgn(n),ir_j}(\tfrac \pi6 |n|y)e(\tfrac 1{24} n x).
\end{equation}
These coefficients are related to the $\rho_j(n)$ in \cite{AA} by $\rho_j(n) = \mu_j(24n-23)$.

Theorem~4.1 of \cite{AA} is a Kuznetsov trace formula for the sums $A_c(n)$ since $A_c(n)=\sqrt{-i}\, S(1,1-n,c,\chi)$ in the notation of that paper.
To state the formula, we first let $\phi:[0,\infty)\to \C$ be a four times continuously differentiable function satisfying
\begin{equation}
    \phi(0)=\phi'(0)=0, \quad \phi^{(j)}(x) \ll x^{-2-\ep} \quad (j=0,\ldots,4) \text{ as }x\to\infty,
\end{equation}
and let $\check\phi$ denote the integral transform
\begin{equation}
    \check\phi(r) = \cosh(\pi r) \int_0^\infty K_{2ir}(u) \phi(u) u^{-1} \, du.
\end{equation}
Then for $n\geq 1$ we have
\begin{equation}
\sum_{c=1}^\infty \frac{A_c(n)}{c} \phi \left( \frac{\pi\sqrt{24n-1}}{6c} \right)
= 
\tfrac 13 \sqrt{24n-1} \sum_{r_j>0} \frac{\overline{\mu}_j(1)\mu_j(1-24n)}{\cosh \pi r_j} \check \phi(r_j). 
\end{equation}
We choose $\phi = \phi_{a,x,T}$ as in Section~6 of \cite{AA}, where $a=\tfrac \pi 6 \sqrt{24n-1}$ and $T=x^{1/3}$.
Breaking the spectral sum into three ranges as in the proof of \cite[Proposition 9.2]{AA},
we find that
\begin{equation}
\sum_{x\leq c\leq 2x} \frac{A_c(n)}{c}
\ll
x^{1/6}\log x + \mathcal M_1 + \mathcal M_2 + \sum_{\ell=0}^\infty \mathcal M_3\left(2^\ell \frac{a}{x}\right),
\end{equation}
where
\begin{align}
    \mathcal M_1 &= \sqrt{24n-1}\sum_{0<r_j<a/8x} \frac{|\mu_j(1)\mu_j(1-24n)|}{\cosh(\pi r_j)} r_j^{-3/2}e^{-r_j/2}, \\
    \mathcal M_2 &= x\sum_{a/8x\leq r_j<a/x} \frac{|\mu_j(1)\mu_j(1-24n)|}{\cosh(\pi r_j)}, \\
    \mathcal M_3(A) &= \frac{\sqrt{24n-1}}{A^{3/2}}
    \min\left(1,\frac{x^{1/3}}{A}\right)
    \sum_{A \leq r_j < 2A} \frac{|\mu_j(1)\mu_j(1-24n)|}{\cosh(\pi r_j)}.
\end{align}
After applying the Cauchy-Schwarz inequality to each sum above, we see that we need an upper bound for the sums
\begin{equation}\label{eq:two-sums}
    \sum_{r_j\leq x}\frac{|\mu_j(1)|^2}{\cosh(\pi r_j)}
    \quad\text{ and }\quad
    (24n-1) \sum_{r_j\leq x}\frac{|\mu_j(1-24n)|^2}{\cosh(\pi r_j)}
\end{equation}
for all $n\geq 1$.
One estimate is provided by Theorem~4.1 of \cite{ADuke}:
for all $m\neq 0$ we have
\begin{equation} \label{eq:mean-val-estimate}
    |m|\sum_{r_j\leq x}\frac{|\mu_j(24m-23)|^2}{\cosh(\pi r_j)} \ll x^{-\sgn(m)/2}\left(x^2+|m|^{1/2}\right)(|m|x)^\ep.
\end{equation}
This result uses \eqref{eq:lehmer-weil-bound} and the fact that the negatively indexed Fourier coefficients in weight $1/2$ are positively indexed Fourier coefficients in weight $-1/2$ (see (2.17) of \cite{AA}).
When $x$ is large compared with $|m|$, say $x\gg |m|^{1/4}$ the estimate \eqref{eq:mean-val-estimate} is essentially sharp,
so it suffices as an estimate for the first sum in \eqref{eq:two-sums}.
We require an estimate in the complementary range $x\ll |m|^{1/4}$ for the second sum.
It will depend on the factorization of $1-24n$; for the remainder of the section, write
\begin{equation} \label{eq:24n-dw}
    1-24n=dw^2, \quad \text{ where $d\equiv 1 \!\!\!\! \pmod{24}$ is a negative fundamental discriminant.}
\end{equation}

There are Hecke operators $T_{p^2}$ on the spaces $\mathcal S_{1/2}(1,\nu_\eta,r)$ for primes $p\geq 5$ (see Section~2.6 of \cite{AA}); they act on Fourier expansions via
\begin{equation}
    T_{p^2} u_j(z) = \sum_{n\equiv 1(24)} \left( p \mu_j(p^2n) + p^{-1/2} \legendre{12n}{p} \mu_j(n) + p^{-1} \mu_j(n/p^2) \right)W_{\frac 14\sgn(n),ir_j}\left(\tfrac \pi6 | n|y\right) e(\tfrac{1}{24} nx),
\end{equation}
where $\mu_j(n/p^2)=0$ if $p^2\nmid n$.
We can choose our basis $\{u_j\}$ such that each $u_j$ is a an eigenform of $T_{p^2}$ for all $p\geq 5$, i.e.
\begin{equation} \label{eq:eigenform-def}
    T_{p^2} u_j = \lambda_j(p) u_j.
\end{equation}
If $\mu_j(d) = 0$ then \eqref{eq:eigenform-def} implies that $\mu_j(dw^2)=0$ for all $w$.
Since $u_j\neq 0$ there exists a squarefree $d$ such that $\mu_j(d)\neq 0$; in what follows we will assume that $d$ is chosen with this property.

Our next aim is to construct the Shimura lift $S_d(u_j)$ associated to the fundamental discriminant $d$; for $d>0$ this is done in Section~5 of \cite{AA}, but here we need it for $d<0$.
Define numbers $a(n)$ by the formal identity
\begin{equation} \label{eq:shimura-l-function-def}
    \mu_j(d) \sum_{n=1}^\infty \frac{a(n)}{n^s} = L(s+\tfrac 12,\chi_d) \sum_{m=1}^\infty \left(\frac{12}{m}\right)\frac{\mu_j(dm^2)}{m^{s-1}},
\end{equation}
where $\chi_d = \legendre d\cdot$, and let
\begin{equation} \label{eq:shimura-lift-def}
    v(z) = S_d(u_j)(z) = 2\sqrt y\sum_{n\neq 0} a(|n|) K_{2ir_j} (2\pi |n|y) e(nx).
\end{equation}
Then $v$ satisfies $T_p v = (\tfrac{12}{p})\lambda_j(p) v$ for all $p\geq 5$, where $T_p$ is the usual Hecke operator in weight $0$ that acts on Fourier expansions as
\begin{equation}
    (T_p v)(z) = 2y^{1/2} \sum_{n\neq 0} \left(a(|pn|) + \chi_0(p)p^{-1}a(|n/p|)\right) K_{2ir_j}(2\pi|n|y) e(nx)
\end{equation}
(see \cite[Corollary~5.2]{AA} for a proof of this; note that the assumption $d>0$ is not used in the proof).
Here $\chi_0$ is the trivial character modulo $6$.
Thus $a(p)=(\tfrac{12}{p})\lambda_j(p)$.
We claim that $v$ is also an eigenform of $T_2$ and $T_3$.
Let $p\in \{2,3\}$.
Then by \eqref{eq:shimura-l-function-def} we have $a(p^\ell) = p^{-\ell/2}$.
If $n=p^\ell n'$ with $p\nmid n'$ then 
\eqref{eq:shimura-l-function-def} yields
\begin{align}
    a(pn)
    &=
    \chi_d(p)^{\ell+1} p^{-(\ell+1)/2}
    \mu_j(d)^{-1}\sum_{km=n'} \chi_d(k) \left(\frac{12}{m}\right) \frac{m}{\sqrt k} \mu_j(dm^2) \\
    &= p^{-(\ell+1)/2} a(n') = p^{-1/2}a(p^\ell)a(n') = p^{-1/2}a(n).
\end{align}
Thus $T_p v = p^{-1/2} v$ for $p\in \{2,3\}$.
It remains to show that $v$ is actually a Maass cusp form of weight $0$.

\begin{proposition} \label{prop:shimura-maass}
Let $d\equiv 1\pmod{24}$ be negative and squarefree.
Then $S_d(u_j) \in \mathcal S_{0}(6,\chi_0,2r_j)$.
Furthermore, $S_d(u_j)$ has eigenvalue $-1$ under the Atkin-Lehner involutions $W_2$ and $W_3$.
\end{proposition}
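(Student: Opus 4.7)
The strategy is to follow the construction in \cite[Section~5]{AA} that establishes the analogous statement for positive fundamental discriminants $d$, and to check that each step survives the sign change $d \mapsto -|d|$. Three items must be verified: the weight-$0$ Laplacian eigenvalue, the $\Gamma_0(6)$-modularity together with cuspidality, and the Atkin-Lehner signs at $2$ and $3$.

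The spectral parameter is immediate. Each summand $\sqrt{y}\,K_{2ir_j}(2\pi|n|y)e(nx)$ of \eqref{eq:shimura-lift-def} is a weight-$0$ Whittaker function annihilated by $-\Delta_0-(\tfrac{1}{4}+(2r_j)^2)$, and term-by-term differentiation is justified by the polynomial bound $a(|n|)\ll n^{1/2+\ep}$ one reads off from \eqref{eq:shimura-l-function-def} using the trivial estimate on $\mu_j(dm^2)$. The absence of an $n=0$ term combined with the exponential decay of $K_{2ir_j}$ at infinity gives cuspidality at the cusp $\infty$; cuspidality at the remaining cusps of $\Gamma_0(6)$ then follows once $\Gamma_0(6)$-modularity is in hand.

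The principal obstacle is the modular transformation law under $\Gamma_0(6)$. For $d>0$, \cite[Section~5]{AA} realizes $v$ as a Shimura-Niwa-type theta lift of $u_j$: one pairs $u_j$ against a real-analytic theta kernel $\Theta(z,\tau)$ that transforms automorphically in $z\in\Gamma_0(6)\backslash\mathbb{H}$ (with trivial character) and in $\tau$ like a weight-$1/2$ form with eta multiplier on $\SL_2(\Z)$. The same construction applies for $d<0$ because the modular transformation properties of the kernel at the finite places are insensitive to $\sgn(d)$; only the archimedean factor in the kernel changes, and that is precisely what is needed to produce the $K_{2ir_j}$-coefficients appearing in \eqref{eq:shimura-lift-def}. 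Convergence of the pairing is still guaranteed by the cuspidality of $u_j$, and an unfolding of the Fourier expansion shows the lifted form's coefficients satisfy exactly the Hecke relations encoded by \eqref{eq:shimura-l-function-def}, giving $v\in\mathcal{S}_0(6,\chi_0,2r_j)$ with trivial nebentypus (the character $\chi_d^2$ that naively appears is trivial away from $d$, and the normalization at primes dividing $d$ is forced by the expression \eqref{eq:shimura-l-function-def}).

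The Atkin-Lehner eigenvalues then come from the Hecke data derived just before the proposition: $T_p v = p^{-1/2} v$, equivalently $a(p^\ell)=p^{-\ell/2}$ for all $\ell\geq 0$ and $p\in\{2,3\}$. The resulting local Euler factor $(1-p^{-s-1/2})^{-1}$ identifies the local representation of $v$ at $p$ as the (unramified) Steinberg representation, and since $p\,\|\,6$, the standard Atkin-Lehner-Hecke dictionary at a prime exactly dividing the level forces $W_p v=-v$, completing the proof.
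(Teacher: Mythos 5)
There is a genuine gap at the heart of your argument: the $\Gamma_0(6)$-modularity of $S_d(u_j)$ for \emph{negative} $d$ is asserted, not proved. You appeal to the theta-kernel construction used in \cite{AA} for $d>0$ and claim that ``only the archimedean factor in the kernel changes,'' but this is exactly the point at issue. For the eta multiplier in weight $1/2$, the coefficients $\mu_j(dm^2)$ with $d<0$ are negatively indexed and sit in front of $W_{-1/4,ir_j}$ rather than $W_{1/4,ir_j}$ (equivalently, they are positively indexed coefficients in weight $-1/2$), so the sign of $d$ interacts nontrivially with the weight and multiplier; one cannot simply recycle the $d>0$ kernel and declare that the finite-place transformation behaviour is unchanged. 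This is why the paper does not argue this way at all: it proves the identity \eqref{eq:shimura=cycle-int} by computing cycle integrals of the sieved Poincar\'e series $F_{m,Q}^\ast(z,s)$ over the geodesics $C_Q$ attached to indefinite forms of discriminant $D=dd'$, weighted by the genus character $\chi_d$, evaluating them as sums of Kloosterman sums $S(p_0,q_0,c,\nu_\eta)$, and extracting residues at $s=\tfrac12+ir$ via Fay's resolvent kernel and Proposition~\ref{prop:residue-Fm}. The resulting identity expresses $\sum_{r_j=r/2}\mu_j(d')\bar\mu_j(d)S_d(u_j)$, for every admissible $d'$, as an explicit combination of forms $\varphi\in\mathcal B_r^\ast$, which lie in $\mathcal S_0(6,\chi_0,r)$ and carry Atkin--Lehner signs $\alpha(2)=\alpha(3)=-1$ by construction; Bir\'o's linear-independence argument in $d'$ then isolates each $S_d(u_j)$ and gives both the modularity and the Atkin--Lehner eigenvalues simultaneously. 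Your proposal contains none of this, and without it the proposition is not established.

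Two secondary remarks. Your Atkin--Lehner step (eigenvalue $p^{-1/2}$ for $U_p$, $p\in\{2,3\}$, forces a Steinberg local component, hence $W_pv=-v$) is a legitimate alternative to the paper's route, but it is conditional on the missing modularity, and to rule out the $p$-oldform possibility you must invoke a nontrivial bound towards Ramanujan for the level $6/p$ forms (Kim--Sarnak suffices); this should be said explicitly. Also, cuspidality at the cusps other than $\infty$ does not follow ``once modularity is in hand'' merely from exponential decay at $\infty$: the constant terms at the cusps $0$, $1/2$, $1/3$ must be shown to vanish, which again is delivered automatically by the paper's identification of $S_d(u_j)$ with an element of the span of $\mathcal B_r^\ast$ but is not addressed in your argument.
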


We will prove Proposition~\ref{prop:shimura-maass} and define $W_2$ and $W_3$ in the next section.

The Shimura lift allows us to estimate $\mu_j(dw^2)$ in terms of $\mu_j(d)$.
Suppose that $(w,6)=1$.
By \eqref{eq:shimura-l-function-def} we have
\begin{equation}
    w\mu_j(dw^2) = \left(\frac{12}{w}\right)\mu_j(d) \sum_{\ell n = w} \mu(\ell) \chi_d(\ell) \ell^{-1/2} a(n) \ll |\mu_j(d)| w^{\ep} \max_{n\mid w} |a(n)|.
\end{equation}
A result of Kim and Sarnak \cite{KS02} gives $|a(n)| \ll |n|^{\theta+\ep}$ where $\theta = \tfrac7{64}$.
It follows that for $n\geq 1$ we have
\begin{equation} \label{eq:n-to-t}
    (24n-1) \sum_{r_j\leq x} \frac{|\mu_j(1-24n)|^2}{\cosh(\pi r_j)} \ll |d| w^{2\theta+\ep} \sum_{r_j\leq x} \frac{|\mu_j(d)|^2}{\cosh \pi r_j}, \qquad \text{ where }1-24n=dw^2.
\end{equation}
For each $j$, choose an orthogonal basis $\mathcal B_{j}$ of the subspace of $\mathcal S_0(6,\chi_0,2r_j)$ of forms with eigenvalue $-1$ under $W_2$ and $W_3$, where each element of $\mathcal B_j$ is an eigenform for all $T_p$.
We arithmetically normalize each $\varphi\in \mathcal B_j$ so that $a(1)=1$.
Then
\begin{equation}
    \sum_{r_j\leq x} \frac{|\mu_j(d)|^2}{\cosh \pi r_j} \leq \sum_{r_j\leq x} \frac{1}{\cosh\pi r_j} \sum_{\varphi\in \mathcal B_{j}} \sum_{S_d(u_j)=\varphi}|\mu_j(d)|^2.
\end{equation}
In the next section we will prove the following formula.%
\footnote{An analogous formula (which can be proved in a similar way) holds when $d$ is positive, but we will not need it here.}
\begin{proposition} \label{prop:sum-walds}
Suppose that $d\equiv 1\pmod{24}$ is negative.
Then for each $\varphi\in \mathcal B_{j}$ we have
\begin{equation} \label{eq:sum-walds}
    \frac{\pi}{3}|d|\sum_{S_d (u_j) = \varphi}|\mu_j(d)|^2 = \langle \varphi,\varphi \rangle^{-1} \left|\Gamma\left(\tfrac 34+ir_j\right)\right|^2 L(\tfrac 12,\varphi\times \chi_d),
\end{equation}
where $L(s,\varphi\times\chi_d)$ is the analytic continuation of the $L$-function
\begin{equation}
    L(s,\varphi\times\chi_d) = \sum_{n=1}^\infty \frac{a(n)\chi_d(n)}{n^s}.
\end{equation}
\end{proposition}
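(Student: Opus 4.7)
The identity \eqref{eq:sum-walds} is a Waldspurger-type formula, and my plan is to adapt the approach used for positive $d$ in Section~5 of \cite{AA} (itself modeled on Katok-Sarnak and Baruch-Mao) to the negative discriminant case. The main strategy is to compute a single Rankin-Selberg / theta integral two ways: spectrally, producing the left-hand side of \eqref{eq:sum-walds} summed over the preimages of $\varphi$; and by unfolding, producing $L(\tfrac12,\varphi\times\chi_d)/\langle\varphi,\varphi\rangle$ times an explicit archimedean factor.

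\textbf{Step 1: Set up the integral and unfold.} I would form an integral of the type
\[
    I \;=\; \int_{\Gamma_0(6)\backslash \mathbb{H}} \overline{\varphi(z)} \, \theta_d(z) \, \mathcal{P}(z) \, \frac{dx\,dy}{y^2},
\]
where $\theta_d$ is a shadow of $u_j$ against a half-integral weight theta kernel attached to the discriminant $d$, and $\mathcal{P}$ is an auxiliary test function (an incomplete Eisenstein series or Poincaré series) chosen so that unfolding against the Fourier expansion of $\varphi$ produces a Dirichlet series in the coefficients $a(n)\chi_d(n)$. Using \eqref{eq:shimura-l-function-def} as the key algebraic input, this Dirichlet series consolidates into $\mu_j(d)\cdot L(\tfrac12,\varphi\times\chi_d)$ divided by a local correction supported at $p=2,3$; the $-1$ eigenvalues of $\varphi$ under $W_2$ and $W_3$ from Proposition~\ref{prop:shimura-maass} are what make these local factors align, together with the condition $d\equiv 1\pmod{24}$ which guarantees $\chi_d$ is unramified at $2$ and $3$.

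\textbf{Step 2: Spectral side and Hecke projection.} Expanding $\theta_d$ spectrally in the weight-$\tfrac12$ space $\mathcal{S}_{1/2}^\ast(1,\nu_\eta)$, the contribution of each $u_j$ carries the factor $\overline{\mu_j(d)}$ from the $d$-th Fourier coefficient (see \eqref{eq:uj-normalize}), while pairing with $\varphi$ after integration selects exactly those $u_j$ whose Shimura lift is $\varphi$. Combined with the unfolded side, this yields $\sum_{S_d(u_j)=\varphi}|\mu_j(d)|^2$ multiplied by $\langle\varphi,\varphi\rangle$ and an archimedean factor, equal to $L(\tfrac12,\varphi\times\chi_d)$. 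The arithmetic normalization $a(1)=1$ of $\varphi$ and the Hecke relations eliminate the ambiguity of the Shimura preimage class, so summing over $u_j$ with $S_d(u_j)=\varphi$ is what naturally appears.

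\textbf{Step 3: Archimedean calculation and main obstacle.} The remaining task is to compute the archimedean local integral. It reduces to a Mellin-type integral of the Whittaker function $W_{-1/4,ir_j}(\tfrac\pi6 |d|y)$ (appearing since $d<0$ forces the lower-sign Whittaker model in \eqref{eq:uj-normalize}) against an appropriate Bessel-type kernel, and it is this step that produces the factor $|\Gamma(\tfrac34+ir_j)|^2$, the constant $\pi/3$, and the leading factor $|d|$. The main obstacle I expect is precisely this archimedean computation: the correct identification of the $\Gamma$-factor depends delicately on the sign of $d$ and the weight $\tfrac12$ (the sign choice $\mathrm{sgn}(d)/4=-1/4$ gives $\Gamma(\tfrac34 + ir_j)$, rather than $\Gamma(\tfrac14+ir_j)$ which would appear for $d>0$). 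A secondary obstacle is the careful tracking of the local factors at $p=2,3$, which must collapse cleanly thanks to Proposition~\ref{prop:shimura-maass} and $d\equiv 1\pmod{24}$; in particular, one should verify that the sign $d<0$ together with the Atkin-Lehner $-1$ eigenvalues yields root number $+1$ for $L(s,\varphi\times\chi_d)$, so that the central value does not vanish for sign reasons and the formula is meaningful.
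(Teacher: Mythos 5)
Your route---a Shintani/Katok--Sarnak style theta integral, unfolded against an auxiliary incomplete Eisenstein or Poincar\'e series---is genuinely different from the paper's argument, but as written it has a gap at its foundation. The pivotal object, a theta kernel compatible with the eta multiplier $\nu_\eta$ on $\mathrm{SL}_2(\Z)$ (Fourier support on $n\equiv 1\pmod{24}$, with \emph{negative} index since $d<0$) in one variable and with level $6$, trivial character and Atkin--Lehner signs $-1$ in the other, is never constructed or cited, and producing such a kernel is essentially the hard part of the problem; it is precisely what the paper avoids. Even granting such a kernel, your Step~2 assumes that the spectral pairing selects exactly the $u_j$ with $S_d(u_j)=\varphi$, where $S_d$ is the purely coefficient-wise lift \eqref{eq:shimura-l-function-def}; identifying a theta lift with that coefficient-wise lift is not a formality---it is essentially equivalent to the statement being proved (together with Proposition~\ref{prop:shimura-maass}), so the sketch is circular at this point. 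There is also an internal inconsistency in the setup: $\theta_d$ is introduced as a shadow \emph{of $u_j$}, yet in Step~2 it is expanded spectrally over all $u_j$. Finally, the archimedean computation, which you correctly flag as the source of $\pi/3$, $|d|$ and $\left|\Gamma\left(\tfrac34+ir_j\right)\right|^2$, is deferred to ``an appropriate Bessel-type kernel''; nothing in the sketch pins down these constants, and for a Waldspurger-type identity the constants are the content.

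For comparison, the paper's proof is geometric rather than theta-theoretic: it starts from the cycle-integral identity \eqref{eq:shimura=cycle-int}, obtained by taking residues of the weight-$0$ Poincar\'e series $F_m^\ast$ (Proposition~\ref{prop:residue-Fm}) and evaluating genus-character-twisted cycle integrals in terms of the Kloosterman sums $S(p_0,q_0,c,\nu_\eta)$, matched against residues of the weight-$\tfrac12$ resolvent expansion. Proposition~\ref{prop:sum-walds} then follows by specializing $d'=d$, so that $D=d^2$ is a square: by Lemma~\ref{lem:square-quad-forms} the classes are represented by $[0,d,c]$ with $0\leq c<|d|$, the geodesics are vertical lines, and the cycle integral in \eqref{eq:muj-d-muj-dp-int} becomes a Mellin transform of the Fourier expansion of $\varphi$, as in \eqref{eq:mellin-transform-g-sum}: the sum over $c$ produces the Gauss sum $i\left(\tfrac dn\right)\sqrt{|d|}$ (whence the factor $|d|$ and the twist $\chi_d$), the $y$-integral of the $K$-Bessel function produces $\Gamma\left(\tfrac s2+\tfrac{ir}2+\tfrac34\right)\Gamma\left(\tfrac s2-\tfrac{ir}2+\tfrac34\right)$ with $r=2r_j$, and setting $s=0$ gives exactly \eqref{eq:sum-walds}. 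If you want to salvage your plan, you would need to either construct the eta-multiplier theta kernel explicitly and prove its unfolding reproduces \eqref{eq:shimura-l-function-def}, or switch to the cycle-integral mechanism above, where the square-discriminant specialization does all of that work for you.
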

Corollary~0.3 of \cite{HL94} gives the upper bound $\langle \varphi,\varphi \rangle^{-1} \ll (1+r_j)^\ep e^{2\pi r_j}$.
So by Stirling's formula we have
\begin{equation}
    \frac{|d|}{\cosh\pi r_j}\sum_{S_d (u_j) = \varphi} |\mu_j(d)|^2 \ll (1+r_j)^{\frac 12+\ep} L(\tfrac 12,\varphi\times \chi_d).
\end{equation}
Thus
\begin{equation}
    |d| \sum_{r_j\leq x} \frac{|\mu_j(d)|^2}{\cosh \pi r_j} \ll \sum_{r_j\leq x} (1+r_j)^{\frac 12+\ep} \sum_{\varphi\in \mathcal B_j} L(\tfrac 12,\varphi\times \chi_d).
\end{equation}
In Section~\ref{sec:subconvexity} we prove a hybrid subconvexity bound, uniform in both $d$ and $r_j$, for the central $L$-values appearing above.
Theorem~\ref{thm:subconvexity}, together with H\"older's inequality, shows that
\begin{equation} \label{eq:waldspurger-and-subconvexity}
    |d| \sum_{0<r_j\leq x} \frac{|\mu_j(d)|^2}{\cosh \pi r_j}  \ll |d|^{\frac13} x^{\frac 52}(|d|x)^{\ep}.
\end{equation}

We return to the sums $\mathcal M_\ell$ for $\ell=1,2,3$ above.
By \eqref{eq:mean-val-estimate}, \eqref{eq:n-to-t}, \eqref{eq:waldspurger-and-subconvexity}, and  the Cauchy-Schwarz inequality, we find that
\begin{equation}
    \sqrt{24n-1}\sum_{0<r_j\leq x} \frac{|\mu_j(1)\mu_j(1-24n)|}{\cosh(\pi r_j)} \ll x^{3/4} \left(x^{5/4} + \min\left(|dw^2|^{1/4}, x^{5/4}|d|^{1/6}w^{\theta}\right)\right)(nx)^\ep.
\end{equation}
For $\mathcal M_1$ and $\mathcal M_2$ we use that $\min(a,b)\leq b$ to see that
\begin{equation}
    \mathcal M_1 \ll \sqrt{24n-1} \sum_{T=0}^\infty e^{-T/2} \sum_{T<r_j\leq T+1}\frac{|\mu_j(1)\mu_j(1-n)|}{\cosh(\pi r_j)} \ll |d|^{1/6+\ep}w^{\theta+\ep}
\end{equation}
and
\begin{equation}
    \mathcal M_2 \ll x^{-1}|d|^{2/3}w^{1+\theta}(nx)^\ep.
\end{equation}
For $\mathcal M_3(A)$ we use that $\min(a,b)\leq a^{2/5}b^{3/5}$ to get
\begin{equation}
    \mathcal M_3(A) \ll \min(1,A^{-1}x^{1/3})\left(A^{1/2}+|d|^{1/5}w^{(1+3\theta)/5}\right)(nA)^{\ep},
\end{equation}
from which it follows that
\begin{equation}
    \sum_{\ell=0}^\infty \mathcal M_3\left(2^\ell \frac ax\right) \ll
    \left(x^{1/6} + |d|^{1/5}w^{(1+3\theta)/5}
    \right)(nx)^\ep.
\end{equation}
Thus we have
\begin{equation}
    \sum_{x\leq c\leq 2x} \frac{A_c(n)}{c} \ll \left(x^{1/6} + 
    |d|^{1/5}w^{(1+3\theta)/5}
    +x^{-1}|d|^{2/3}w^{1+\theta}
    \right)(nx)^\ep.
\end{equation}
Applying Lehmer's bound \eqref{eq:lehmer-weil-bound} in the range $c\leq |d|^\alpha w^\beta$ we find that
\begin{align}
    \sum_{c\leq x} \frac{A_c(n)}{c} 
    &\ll
    \sum_{c\leq |d|^\alpha w^\beta} \frac{A_c(n)}{c} + \sum_{c > |d|^\alpha w^\beta} \frac{A_c(n)}{c} \\
    &\ll
    \left(|d|^{\alpha/2}w^{\beta/2}+x^{1/6}
    +|d|^{1/5}w^{(1+3\theta)/5}
    +|d|^{2/3-\alpha}w^{1-\beta+\theta}\right)(nx)^\ep.
\end{align}
We choose $\alpha=4/9$ and $\beta=2/3$ to balance the first and third terms and this yields
\begin{equation} \label{eq:sum-acn-main}
    \sum_{c\leq x} \frac{A_c(n)}{c} \ll \left(x^{1/6} + |d|^{2/9}w^{1/3} \right)(nx)^\ep.
\end{equation}
Arguing as in Section~10 of \cite{AA} we conclude that
\begin{equation}
    R(n,\alpha\sqrt n) \ll_\alpha |d|^{-19/36+\ep}w^{-7/6+\ep}.
\end{equation}
This completes the proof of Theorem~\ref{thm:main}.

\section{The Shimura lift and the Waldspurger formula}
\label{sec:shimura}

In this section we prove Propositions~\ref{prop:shimura-maass} and \ref{prop:sum-walds}.
We will follow the arguments given in Sections~8--10 of \cite{DIT-geometric} with modifications following the ideas in \cite{andersen-infinite,andersen-singular}.

We first need Poincar\'e series of weight $0$ for $\Gamma_6 = \Gamma_0(6)/\{\pm I\}$.
Let $\Gamma_\infty = \{\pm \left(\begin{smallmatrix}1&n\\0&1\end{smallmatrix}\right) : n\in \Z\}$.
For any $m\in \Z$ and for $\mathrm{Re}(s)>1$ define
\begin{equation}
    F_{m}(z,s) = \sum_{\gamma\in \Gamma_\infty \backslash \Gamma_6} f_m(\gamma z,s),
\end{equation}
where
\begin{equation}
    f_m(z,s) = 
    \begin{cases}
    y^s & \text{ if } m=0, \\
    \frac{\Gamma(s)}{2\pi\sqrt{|m|}\Gamma(2s)} M_{0,s-\frac12}(4\pi |m|y) e(mx) & \text{ if }m\neq 0.
    \end{cases}
\end{equation}
The function $F_0(z,s)$ is the nonholomorphic Eisenstein series for $\Gamma_6$ (see \cite[Chapter~15]{IK}).
For $m\neq 0$ we have the following analogue of Proposition~3 of \cite{DIT-geometric}.
It can be proved similarly, with only very minor modifications.

\begin{proposition} \label{prop:residue-Fm}
For $r\geq 0$ let $\mathcal B_r$ denote a Hecke-Maass orthogonal basis of the finite-dimensional vector space $\mathcal S_0(6,\chi_0,r)$.
For $\varphi\in \mathcal B_r$, write
\begin{equation}
    \varphi(z) = 2\sqrt y \sum_{m\neq 0} a_\varphi(m) K_{ir}(2\pi |m|y)e(mx).
\end{equation}
Then $F_m(z,s)$ has a meromorphic continuation to $\mathrm{Re}(s)>0$ and
\begin{equation}
    \mathrm{Res}_{s=\frac 12+ir}(2s-1)F_{m}(z,s) = 2\sum_{\varphi\in \mathcal B_r} \langle \varphi,\varphi \rangle^{-1}a_\varphi(m) \varphi(z).
\end{equation}
\end{proposition}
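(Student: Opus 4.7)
The plan is to follow the proof of Proposition~3 of \cite{DIT-geometric} essentially verbatim, replacing $\SL_2(\Z)$ with $\Gamma_6$ and the trivial multiplier. For $\mathrm{Re}(s)>1$ the series defining $F_m(z,s)$ converges absolutely and, because $m\neq 0$, the resulting function is square-integrable on $\Gamma_6\backslash \mathbb{H}$. Computing the inner product against a cusp form $\varphi\in \mathcal B_r$ by unfolding along $\Gamma_\infty\backslash \Gamma_6$ and inserting the Fourier expansion of $\varphi$, the $x$-integral over $[0,1]$ isolates the $m$-th coefficient and leaves
\begin{equation}
\langle F_m(\cdot,s),\varphi\rangle = \frac{\Gamma(s)\,\overline{a_\varphi(m)}}{\pi\sqrt{|m|}\,\Gamma(2s)}\int_0^\infty M_{0,s-\frac 12}(4\pi|m|y)\,K_{ir}(2\pi|m|y)\,y^{-\frac 32}\,dy.
\end{equation}

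The remaining integral is a standard Mellin transform. Using the identity $M_{0,s-\frac 12}(2u) = 2^{2s-1}\Gamma(s+\tfrac 12)\sqrt u\, I_{s-\frac 12}(u)$ together with the classical evaluation of $\int_0^\infty I_\mu(t)K_\nu(t) t^{\sigma-1}\,dt$ (Gradshteyn--Ryzhik 6.576.4) and the Legendre duplication formula, one obtains
\begin{equation}
\langle F_m(\cdot,s),\varphi\rangle = c(s,|m|)\,\overline{a_\varphi(m)}\,\Gamma\!\left(s-\tfrac 12+ir\right)\Gamma\!\left(s-\tfrac 12-ir\right)
\end{equation}
for an explicit nonvanishing entire factor $c(s,|m|)$. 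The right-hand side is meromorphic on $\C$, with simple poles in $\mathrm{Re}(s)>0$ only at $s=\tfrac 12\pm ir$.

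Combining this formula with the spectral decomposition of $F_m(\cdot,s)$ in $L^2(\Gamma_6\backslash \mathbb{H})$ extends $F_m(z,s)$ meromorphically to $\mathrm{Re}(s)>0$: the discrete part inherits its continuation term by term from the Gamma-function identity above, while the Eisenstein integral continues by the standard contour shift to $\mathrm{Re}(t)=0$, its only pole in $\mathrm{Re}(s)>\tfrac 12$ being the simple pole at $s=1$ from the constant residue of the Eisenstein series, which is irrelevant for the claimed residue at $s=\tfrac 12+ir$. Taking $\Res_{s=\frac 12+ir}(2s-1)F_m(z,s)$ then kills every contribution except those $\varphi$ with spectral parameter exactly $r$; the pole of $\Gamma(s-\tfrac 12-ir)$ combines with $(2s-1)$ and the explicit value of $c(\tfrac 12+ir,|m|)$ to yield the stated constant $2$, and $\overline{a_\varphi(m)}=a_\varphi(m)$ by the reality of Hecke eigenforms with trivial nebentypus. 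The principal obstacle is the careful tracking of absolute constants through the Mellin computation to certify that the final numerical coefficient is exactly $2$; after that, the argument is a line-for-line transcription of \cite{DIT-geometric} to the level-$6$ setting.
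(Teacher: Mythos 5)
There is a genuine gap at the foundation of your argument: $F_m(z,s)$ is \emph{not} square-integrable. The seed $f_m(z,s)$ is built from the $M$-Whittaker function, and $M_{0,s-\frac12}(4\pi|m|y)\sim \tfrac{\Gamma(2s)}{\Gamma(s)}e^{2\pi|m|y}$ as $y\to\infty$, so $F_m(z,s)$ grows exponentially at the cusp $\infty$ (this is the whole point of this Niebur-type Poincar\'e series, and is visible in the Fourier expansion \eqref{eq:Fm-fourier-exp}, whose leading term is $f_m(z,s)$ itself). Consequently the $L^2$ spectral decomposition you invoke does not apply to $F_m(\cdot,s)$, and the unfolding of $\langle F_m(\cdot,s),\varphi\rangle$ is not absolutely convergent: pointwise $|f_m(z,s)\varphi(z)|\gg e^{2\pi(|m|-1)y}$ near the cusp, so for $|m|\geq 2$ the interchange of summation and integration that unfolding requires is precisely what has to be justified, and it cannot be fixed by taking $\mathrm{Re}(s)$ large. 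This is why the statement is not proved by the naive Rankin--Selberg computation: the argument behind Proposition~3 of \cite{DIT-geometric} (and behind the weight-$1/2$ analogues invoked later in this section) goes through the resolvent kernel --- $F_m(z,s)$ is essentially an $m$-th Fourier coefficient of the automorphic Green's function, whose meromorphic continuation and spectral residues are supplied by Fay's Theorem~3.1 and Corollary~3.6 \cite{fay}; equivalently one continues the expansion \eqref{eq:Fm-fourier-exp} via the Kloosterman--Selberg zeta functions. Some such device (or a genuine regularization of the pairing, e.g.\ truncation or pairing against incomplete Poincar\'e series) is indispensable; ``square-integrable because $m\neq 0$'' is false.

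That said, your formal Mellin computation is essentially right and does produce the constant $2$, though not in the form you state. Using $M_{0,s-\frac12}(2u)=2^{2s-\frac12}\Gamma(s+\tfrac12)\sqrt{u}\,I_{s-\frac12}(u)$ and the equal-argument case of GR~6.576.4 one finds $\int_0^\infty I_{s-\frac12}(t)K_{ir}(t)\,t^{-1}dt=\bigl((s-\tfrac12)^2+r^2\bigr)^{-1}$, so that (formally)
\begin{equation}
\langle F_m(\cdot,s),\varphi\rangle=\frac{2\,\overline{a_\varphi(m)}}{(s-\tfrac12-ir)(s-\tfrac12+ir)},
\end{equation}
a ratio of Gamma factors that collapses to a rational function rather than the product $\Gamma(s-\tfrac12+ir)\Gamma(s-\tfrac12-ir)$ you wrote (the pole locations in $\mathrm{Re}(s)>0$ happen to agree, but the function does not). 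Then $\Res_{s=\frac12+ir}(2s-1)\cdot 2\bigl((s-\tfrac12)^2+r^2\bigr)^{-1}=2$ exactly, so no delicate constant-chasing remains; and note that $\overline{a_\varphi(m)}=a_\varphi(m)$ is not automatic but requires choosing the Hecke basis with real Fourier coefficients, which should be said. With the continuation and residue identification rerouted through the resolvent kernel (or a regularized pairing), this computation then correctly pins down the residue formula.
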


For each $\ell\mid 6$ let $W_\ell$ denote any matrix with determinant $\ell$ of the form $W_\ell = \psmatrix{a\ell}{b}{6c}{d\ell}$ with $a,b,c,d \in \Z$.
Then the map $\varphi(z)\mapsto \varphi(W_\ell z)$ is an Atkin-Lehner involution on $\mathcal S_0(6,\chi_0,r)$ which does not depend on the choice of $a,b,c,d$. 
It is convenient to choose
\begin{equation}
    W_1 = \psmatrix 1001, \quad 
    W_2 = \psmatrix 2{-1}6{-2}, \quad 
    W_3 = \psmatrix 3163, \quad 
    W_6 = \psmatrix 0{-1}60. 
\end{equation}
Since the Atkin-Lehner involutions commute with the action of the Hecke operators, we can choose the orthogonal basis $\mathcal B_r$ in Proposition~\ref{prop:residue-Fm} to have the additional property that each $\varphi$ satisfies 
\[
    \varphi(W_\ell z) = \alpha(\ell) \varphi(z), \qquad \text{ where } \alpha(\ell)=\pm 1.
\]
Additionally, since $\varphi(W_\ell W_{\ell'}z) = \varphi(W_{\ell''}z)$, where $\ell'' = \ell\ell'/(\ell,\ell')^2$, we see that the only valid sign patterns for $(\alpha(1),\alpha(2),\alpha(3),\alpha(6))$ are $(+,+,+,+)$, $(+,-,-,+)$, and $(+,-,+,-)$.
It follows that
\begin{equation} \label{eq:residue-varphi}
    \mathrm{Res}_{s=\frac 12+ir}(2s-1) \sum_{\ell\mid 6} \mu(\ell) F_{m}(W_\ell z,s) = 8\sum_{\varphi\in \mathcal B_r^\ast} \langle \varphi,\varphi \rangle^{-1}a_\varphi(m) \varphi(z),
\end{equation}
where $\mu$ is the M\"obius function and $\mathcal B_r^\ast$ is the subset of $\mathcal B_r$ containing only the $\varphi$ for which $\alpha(\ell)=\mu(\ell)$.
To simplify the notation, let
\begin{equation}
    F_{m}^*(z,s) = \tfrac 14\sum_{\ell\mid 6} \mu(\ell) F_{m}(W_\ell z,s).
\end{equation}
A straightforward modification of the proof of Proposition~5 of \cite{aa-spt} shows that the Fourier expansion of $F_{m}^\ast(z,s)$ is of the form
\begin{equation} \label{eq:Fm-fourier-exp}
    F_{m}^\ast(z,s) = f_{m}(z,s) + c_{m}(s)y^{1-s} + \sqrt{y}\sum_{n\neq 0} c_{m}(n,s) K_{s-\frac 12}(2\pi|n|y)e(nx),
\end{equation}
where $c_m(n,s)\in\C$ and
\begin{equation}
    c_m(s) = 
    \begin{cases}
    \frac{\sqrt{\pi}\Gamma(s-\tfrac 12)}{4\Gamma(s)} \sum_{\ell\mid 6} \frac{\mu(\ell)}{\ell^s} \sum_{\substack{0<c\equiv 0(6/\ell) \\ (c,\ell)=1}} \frac{\varphi(c)}{c^{2s}}
    &\text{ if $m=0$,} \\
    \frac{\pi^{s}m^{s-\frac12}}{4(s-\tfrac 12)\Gamma(s)} \sum_{\ell\mid 6} \frac{\mu(\ell)}{\ell^s} \sum_{\substack{0<c\equiv 0(6/\ell) \\ (c,\ell)=1}} \frac{R_c(m)}{c^{2s}}
    &\text{ if $m\neq0$,}
    \end{cases}
\end{equation}
where $\varphi(c)$ is the Euler totient function, and $R_c(m)$ is the Ramanujan sum with modulus $c$.
By a standard calculation, $c_m(s)$ can be written in terms of $\zeta(2s)$ and $\zeta(2s-1)$.
The exact evaluations are not important for us here, only that $c_m(s)/c_0(s)$ is meromorphic in $\mathrm{Re}(s)>0$ with no pole at $s=\tfrac 12+ir$ if $r\neq 0$.

Let $D\equiv 1\pmod{24}$ be a positive integer and let $\mathcal Q_D$ denote the set of (indefinite) integral binary quadratic forms $Q(x,y)=[a,b,c](x,y)=ax^2+bxy+cy^2$ with discriminant $D=b^2-4ac$ and with $6\mid a$.
Let $\Gamma_6^*$ denote the group generated by $\Gamma_6$ and $\{W_\ell : \ell \mid 6\}$.
A matrix $\gamma=\begin{smallmatrix}A & B \\ C & D\end{smallmatrix} \in \Gamma^*_6$ acts on $Q\in \mathcal Q_D$ via
\begin{equation}
    (\gamma Q)(x,y) = \frac{1}{\det \gamma} Q(Dx-By, -Cx+Ay).
\end{equation}
The set $\Gamma_6\backslash \mathcal Q_D$ contains finitely many equivalence classes and forms a group under Gauss composition.
For $r\in \{1,5,7,11\}$, let $\mathcal Q_D^{(r)}$ be the subset of $\mathcal Q_D$ comprising those $[a,b,c]$ for which $b\equiv r\pmod{12}$.
In Section~3 of \cite{andersen-singular} it is shown that
\begin{equation}
    \mathcal Q_D = \bigcup_{\ell \mid 6} W_\ell \mathcal Q_D^{(1)}.
\end{equation}
When $D$ is a square we have the following explicit description of $\Gamma_6\backslash \mathcal Q_D$, which is a straightforward generalization of Lemma~3 of \cite{andersen-infinite}.

\begin{lemma} \label{lem:square-quad-forms}
Let $\omega\in \{-1,1\}$.
If $D=d^2$ then
    \begin{equation}
		\{[0,\omega|d|,c] : 0\leq c < |d|\}
    \end{equation}
	is a complete set of representatives for $\Gamma_6\backslash \mathcal Q_D^{(r)}$, where $r\equiv \omega|d|\pmod{12}$.
\end{lemma}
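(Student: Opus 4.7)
The argument splits into three parts: a preliminary invariance check, exhaustiveness, and distinctness. For the invariance, expanding the action of $\gamma = \psmatrix{A}{B}{C}{D} \in \Gamma_6$ on $[a,b,c]$ gives
\[
    b' = b(AD + BC) - 2(aDB + cCA) = b + 2(bBC - aDB - cCA),
\]
and each of $2bBC$, $2aDB$, $2cCA$ is divisible by $12$ (using $6 \mid C$ and $6 \mid a$), so $b' \equiv b \pmod{12}$ and $\Gamma_6$ preserves $\mathcal{Q}_D^{(r)}$. Moreover, the action of $\gamma$ on forms translates to the standard M\"obius action $\alpha \mapsto (A\alpha + B)/(C\alpha + D)$ on the roots of the associated binary form.

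\textbf{Exhaustiveness.} Fix $[a,b,c] \in \mathcal{Q}_D^{(r)}$ with $D = d^2$, and take $d>0$; from $D \equiv 1 \pmod{24}$ we deduce $(d,6) = 1$. If $a = 0$, then $b = \pm d$, and $b \equiv \omega|d| \pmod{12}$ forces $b = \omega|d|$ (since $\omega|d|-(-\omega|d|) = 2|d|$ is nonzero mod $12$); a translation then reduces $c$ into $[0,|d|)$. If $a \ne 0$, the roots are $\alpha, \beta = (-b \pm d)/(2a)$; treat the case $\omega = 1$ (the case $\omega = -1$ is symmetric with $\alpha$ and $\beta$ swapped). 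Then $b \equiv d \pmod{12}$ implies $-b + d = 12m$ for some $m \in \Z$, and writing $a = 6a_0$ we obtain
\[
    \beta = \frac{-b - d}{2a} = \frac{6m - d}{6 a_0}.
\]
Since $\gcd(6m-d, 6) = \gcd(d,6) = 1$, the denominator of $\beta$ in lowest terms is $6a_0/\gcd(6m-d, a_0)$, which is divisible by $6$. Writing $\beta = p/q$ with $\gcd(p,q) = 1$ and $6 \mid q$, choose $A, B \in \Z$ with $Ap + Bq = 1$ and set $\gamma_0 = \psmatrix{A}{B}{-q}{p} \in \Gamma_6$. Then $\gamma_0(\beta) = \infty$, so $\gamma_0 \cdot [a,b,c] = [0, b'', c'']$ with $b''^2 = d^2$; the invariance of $b \bmod 12$ forces $b'' = \omega|d|$, and a final translation normalizes $c''$ into $[0,|d|)$.

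\textbf{Distinctness and main obstacle.} Suppose $\gamma \in \Gamma_6$ maps $[0, \omega|d|, c_1]$ to $[0, \omega|d|, c_2]$ with $c_i \in [0,|d|)$; then $\gamma$ permutes the root pairs $\{\infty, -c_i/(\omega|d|)\}$. If $\gamma(\infty) = \infty$, then $\gamma$ is upper-triangular and acts on $\Q$ by $z \mapsto z + B$, forcing $c_2 - c_1 \equiv 0 \pmod{|d|}$ and hence $c_1 = c_2$. If instead $\gamma(\infty) = -c_2/(\omega|d|)$, then writing this in lowest terms as $(-\omega c_2/g)/(|d|/g)$ with $g = \gcd(c_2, |d|)$, the bottom-left entry of $\gamma$ must equal $\pm |d|/g$; but $|d|/g$ divides $|d|$ and is thus coprime to $6$, contradicting $6 \mid C$. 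The main obstacle is precisely the divisibility calculation in the exhaustiveness step: one must verify that the hypothesis $b \equiv \omega|d| \pmod{12}$ forces one of the two roots of $[a,b,c]$ to have denominator divisible by $6$, so that its cusp is $\Gamma_6$-equivalent to $\infty$. Once that is in hand, the rest is a direct adaptation of Lemma~3 of \cite{andersen-infinite}.
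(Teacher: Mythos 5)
Your proof is correct, and it takes the route the paper itself merely gestures at: the paper gives no proof of this lemma, citing it as a straightforward generalization of Lemma~3 of \cite{andersen-infinite}, and your argument supplies exactly the expected details. In particular you correctly isolate the two points that make the generalization to $\Gamma_6$ work, namely that $(d,6)=1$ (from $D\equiv 1\pmod{24}$) forces one root of any form with $a\neq 0$ to have denominator divisible by $6$, hence a cusp $\Gamma_6$-equivalent to $\infty$, while for distinctness the finite root $-c_2/(\omega|d|)$ has denominator dividing $|d|$ and so coprime to $6$, ruling out a nontrivial identification.
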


For $Q\in \mathcal Q_D$ let $S_Q$ denote the geodesic in $\mathcal H$ connecting the (real) roots of $Q(z,1)=0$.
Explicitly, $S_Q$ is the set of points satisfying
\begin{equation}
    a|z|^2+b\mathrm{Re}z+c = 0.
\end{equation}
When $a\neq 0$, $S_Q$ is a semicircle in $\mathcal H$ which we orient clockwise\footnote{We are following the convention in \cite{DIT-geometric}.} if $a>0$, counterclockwise if $a<0$.
When $a=0$, $S_Q$ is the vertical line $\mathrm{Re}(z) = -c/b$ which we orient downward.
If $D$ is not a square then the group $\Gamma_Q\subseteq \Gamma_6$ of automorphs of $Q$ is infinite cyclic, and if $D$ is a square then $\Gamma_Q$ is trivial.
In either case let $C_Q = \Gamma_Q\backslash S_Q$.
Then $C_Q$ is a closed geodesic (of finite length) on $\Gamma_6\backslash \mathcal H$ if $D$ is not a square, and it is an infinite geodesic otherwise.

Following Section~9 of \cite{DIT-geometric}, we would like to integrate the function $\partial_z F_{m}^\ast (z,s)$ over $C_Q$; when $D$ is not a square there is no issue, but when $D$ is a square the integral does not converge.
In that case we will integrate the function
\begin{equation}
    F_{m,Q}^*(z,s) = \sum_{\ell\mid 6} \mu(\ell)  \sum_{\substack{\gamma\in \Gamma_\infty \backslash \Gamma_6 \\ \gamma W_\ell \mathfrak a_j \neq \infty}} f_m(\gamma W_\ell z,s),
\end{equation}
where $\mathfrak a_1,\mathfrak a_2\in \mathbb P^1(\Q)$ are the endpoints of $C_Q$.
The function $F_{m,Q}^\ast(z,s)$ is studied in Section~3 of \cite{andersen-singular}.
We claim that the endpoints $\mathfrak a_1$ and $\mathfrak a_2$ are related by
\begin{equation}
    \mathfrak a_2 = W_6\gamma \mathfrak a_1
\end{equation}
for some $\gamma\in \Gamma_6$.
This is straightforward to see for the set of forms in Lemma~\ref{lem:square-quad-forms}, and thus holds for all forms in $\mathcal Q_D$.
Since $F_{m,Q}^\ast(z,s)$ is invariant under $z\mapsto W_6 z$, the Fourier expansion of $F_{m,Q}^\ast(z,s)$ is the same at $\mathfrak a_1$ and at $\mathfrak a_2$.
Thus, by \eqref{eq:Fm-fourier-exp}, the Fourier expansion of $F_{m,Q}^\ast(z,s)$ at the cusp $\mathfrak a_j$ is of the form
\begin{equation}
    c_{m,j}(s)y^{1-s} + \sqrt{y}\sum_{n\neq 0} c_{m,j}(n,s) K_{s-\frac 12}(2\pi|n|y)e(nx),
\end{equation}
where $c_{m,1}(s)=c_{m,2}(s)$ and $c_{m,1}(n,s)=c_{m,2}(n,s)$.

For the rest of this section, let $D=dd'$ be a factorization of $D$ into negative discriminants $d,d'\equiv 1\pmod{24}$ where $d$ is squarefree.
The generalized genus character associated to this factorization is
\begin{equation}
    \chi_d(Q) =
    \begin{cases}
    \left(\tfrac dm\right) & \text{ if }\gcd(a,b,c,d)=1 \text{ and } m = Q(x,y)\text{ for some } x,y\in \Z,\\
    0 & \text{ if }\gcd(a,b,c,d)>1.
    \end{cases}
\end{equation}
The following proposition evaluates the cycle integrals in terms of the Kloosterman sums 
\begin{equation}
    S(p_0,q_0,c,\nu_\eta) = \sum_{\left(\begin{smallmatrix} a & b \\ c & d \end{smallmatrix}\right) \in \Gamma_\infty \backslash \Gamma / \Gamma_\infty} \bar\nu_\eta\left(\begin{pmatrix} a & b \\ c & d \end{pmatrix}\right) e\left(\frac{p a + q d}{24c}\right),
\end{equation}
where $p,q\equiv 1\pmod{24}$ and $p_0 = \tfrac{p+23}{24}$, $q_0=\tfrac{q+23}{24}$.
For uniform notation, when $D$ is not a square we define $F_{m,Q}^\ast(z,s) = F_{m}^\ast(z,s)$.

\begin{proposition}
Let $D$ be a positive integer with $D\equiv 1\pmod{24}$ and let $D=dd'$ be a factorization of $D$ into negative integers such that $d\equiv 1\pmod{24}$ is squarefree.
For $m\geq 0$ and $\mathrm{Re}(s)>1$ we have
\begin{multline} \label{eq:cycle-int-prop}
    \sum_{Q\in \Gamma_6\backslash \mathcal Q_D^{(1)}} \chi_d(Q) \int_{C_Q} i \partial_z F_{m,Q}^\ast (z,s) \, dz \\=
    \begin{cases}
        \frac{\Gamma(\frac{s+1}{2})}{\Gamma(\frac s2)}D^{1/4}  \sum_{n\mid m} \left(\frac{12}{m/n}\right) \left(\frac dn\right) n^{-\frac 12} R\left(\frac{m^2}{n^2}d,d',s\right) & \text{ if } m\neq 0, \\
        0 & \text{ if }m=0,
    \end{cases}
\end{multline}
where
\begin{equation}
     R(p,q,s) = 2\sqrt{\pi} \, \sqrt{-i} \sum_{c>0} \frac{S(p_0,q_0,c,\nu_\eta)}{c} J_{s-\frac 12} \left(\frac{\pi \sqrt{|pq|}}{6c}\right), \qquad p_0 = \frac{p+23}{24}.
\end{equation}
\end{proposition}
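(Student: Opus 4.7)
The plan is to carry out the unfolding technique of Duke-Imamoglu-Toth as adapted to level $6$ with the eta multiplier in \cite{andersen-singular,andersen-infinite}. The genus character $\chi_d$ is constant on $\Gamma_6$-classes, so the combination $\sum_{\ell\mid 6}\mu(\ell)$ together with the coset sum $\gamma\in\Gamma_\infty\backslash\Gamma_6$ in the definition of $F_{m,Q}^\ast$ is engineered precisely so that, after changing variables $z\mapsto\gamma W_\ell z$ in each cycle integral, the combined sum over $Q\in\Gamma_6\backslash\mathcal Q_D^{(1)}$, over $\ell$, and over $\gamma$ telescopes into a single sum over $\Gamma_\infty\backslash\mathcal Q_D$. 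In the square case, the identity $\mathfrak a_2=W_6\gamma\mathfrak a_1$ together with Lemma~\ref{lem:square-quad-forms} guarantees that the terms excluded from $F_{m,Q}^\ast$ are exactly those whose unfolded integrals would diverge, so the whole procedure is justified for $\mathrm{Re}(s)>1$.

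After unfolding, the inner integral becomes $\int_{S_Q}i\partial_z f_m(z,s)\,dz$ along the full semicircle $a|z|^2+b\,\mathrm{Re}(z)+c=0$ for each $Q=[a,b,c]$ in $\Gamma_\infty\backslash\mathcal Q_D$. Substituting $z=\tfrac{-b}{2a}+\tfrac{\sqrt{D}}{2|a|}e^{i\theta}$ separates the $x$ and $y$ dependence: the $e(mx)$ factor produces an exponential in $\theta$ and a phase $e(-mb/(2a))$, while the radial part against $M_{0,s-1/2}(4\pi|m|y)$ is a standard Bessel-Mellin integral that yields $J_{s-1/2}\!\left(\pi|m|\sqrt{D}/(6|a|)\right)$ together with the gamma ratio $\Gamma\!\left(\tfrac{s+1}{2}\right)/\Gamma\!\left(\tfrac{s}{2}\right)$ and the scaling $D^{1/4}$. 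The phase $\sqrt{-i}$ in $R(p,q,s)$ comes from the orientation convention on $S_Q$ and the sign of $a$.

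The remaining step is to repackage the sum over $\Gamma_\infty\backslash\mathcal Q_D$ as a Kloosterman sum with the eta multiplier. Sorting the forms by their leading coefficient $a$ (playing the role of $c$ in the Kloosterman sum, with the constraint $6\mid a$) and summing the character $\chi_d(Q)e\!\left(\tfrac{-mb}{2a}\right)$ over $b$ modulo $2a$, one invokes the multiplicative law for generalized genus characters: for $\gcd(a,b,c,d)=1$ and any $k$ coprime to $d$ represented by $Q$, $\chi_d(Q)=(\tfrac{d}{k})$. This produces the outer divisor sum $\sum_{n\mid m}$ and the factor $(\tfrac{12}{m/n})(\tfrac{d}{n})n^{-1/2}$ with arguments $p=(m/n)^2 d$, $q=d'$ in $R(p,q,s)$, while the reduction of the inner sum over $b$ to $S(p_0,q_0,c,\nu_\eta)$ uses the Rademacher-Whiteman explicit formula for $\nu_\eta$ on $\SL_2(\Z)$ to convert the quadratic-residue phases produced by $\chi_d$ into the Dedekind-sum phase of the eta multiplier.

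The main obstacle will be step three: bookkeeping the eta multiplier carefully so that the Atkin-Lehner sum $\sum_{\ell\mid 6}\mu(\ell)$ and the twist by $(\tfrac{12}{m/n})$ combine correctly with the quadratic form character to reproduce $\nu_\eta$, since this is where the half-integral weight structure enters in a way absent from the original DIT argument. The $m=0$ case should vanish by a softer argument: the integrand is then $i\partial_z f_0(z,s)\,dz=is\,y^{s-1}\,dz$, whose integral along each semicircle depends only on the endpoints, and the sum $\sum_{Q\in \Gamma_6\backslash \mathcal Q_D^{(1)}}\chi_d(Q)$ weighted by these boundary values vanishes by genus theory since $d\neq 1$ is a nontrivial discriminant.
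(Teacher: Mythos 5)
Your overall strategy (unfold the cycle integrals via the bijection $(\gamma,\ell,Q)\mapsto\gamma W_\ell Q$, evaluate the archimedean integral over each semicircle as a $J$-Bessel function with the gamma ratio $\Gamma(\tfrac{s+1}{2})/\Gamma(\tfrac s2)$, then evaluate an arithmetic sum over $b$) is the same as the paper's, but the two places where the real content lives are not carried out correctly. First, the step you yourself flag as the main obstacle --- converting the finite sum $\sum_{b \,(12c),\, b^2\equiv D\,(24c)}\left(\tfrac{12}{b}\right)\chi_d\left(\left[6c,b,\tfrac{b^2-D}{24c}\right]\right)e\left(\tfrac{-mb}{12c}\right)$ into the eta-multiplier Kloosterman sums $S(p_0,q_0,c,\nu_\eta)$ together with the divisor sum $\sum_{n\mid(m,c)}$ --- is precisely Proposition~4.2 of \cite{andersen-singular}, a substantial Sali\'e-type identity; sketching it via ``Rademacher--Whiteman phases'' is not a proof. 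Moreover, even granting that identity, it is stated there only for $(m,6)=1$, while the proposition here needs all $m\geq 1$; the paper has to remove that hypothesis by proving the extra identity \eqref{eq:key-ident}, which it reduces to Lemma~5.5 of \cite{AA}. Your plan does not notice that this extension is needed at all.

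Second, your argument for the $m=0$ case is wrong. One has $i\partial_z y^s\,dz=\tfrac s2\,y^{s-1}\,dz$, and the $1$-form $y^{s-1}\,dz=y^{s-1}(dx+i\,dy)$ is not closed (its exterior derivative is $-(s-1)y^{s-2}\,dx\wedge dy$), so its integral along a geodesic semicircle does not depend only on the endpoints; indeed the unfolded integral $H_0(t)=it\int_0^\pi s\,(t\sin\theta)^{s-1}e^{-i\theta}\,d\theta$ is generically nonzero. Consequently the vanishing cannot be reduced to $\sum_Q\chi_d(Q)=0$ from genus theory (and that cancellation would in any case be destroyed by the $Q$-dependent weights). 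In the paper the $m=0$ vanishing is an arithmetic cancellation in the inner sum over $b$: substituting $b\mapsto rb$ with $\left(\tfrac{12}{r}\right)=-1$ and $r^2\equiv 1\pmod{24c(c,d)}$ leaves $\chi_d$ invariant (property P4 of \cite[Lemma~3.1]{andersen-singular}) but flips the sign of $\left(\tfrac{12}{b}\right)$, forcing the sum to vanish. So both the $m\neq 0$ evaluation and the $m=0$ vanishing require arguments your proposal does not supply.
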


\begin{proof}
    We closely follow the proof of Proposition~5.1 of \cite{andersen-singular}.
    In the notation of \cite{andersen-singular} we have
    \begin{equation}
        F_{m,Q}^\ast (z,s) = \frac{\Gamma(s)P_{-m,Q}(z,s)}{2^{s+3}\sqrt{|m|}\Gamma(\tfrac{s+1}{2})^2} .
    \end{equation}
    Let $T_m(d,d')$ denote the left-hand side of \eqref{eq:cycle-int-prop} and write
    \begin{equation}
        2i\partial_z F_{m,Q}^\ast (z,s) = \sum_{\ell\mid 6} \mu(\ell)  \sideset{}{'}\sum_{\gamma\in \Gamma_\infty \backslash \Gamma_6} f_{2,m}(\gamma W_\ell z,s) \frac{d(\gamma W_\ell z)}{dz},
    \end{equation}
    where $f_{2,m}(z,s) = \phi_{2,m}(y,s)e(mx)$ and (using (9.2) of \cite{DIT-geometric})
    \begin{equation}
        \phi_{2,m}(y,s) = 
        \begin{cases}
            sy^{s-1} & \text{ if }m=0, \\
            sm^{-1/2}(2\pi y)^{-1} \tfrac{\Gamma(s)}{\Gamma(2s)}M_{1,s-\frac 12}(4\pi my) & \text{ if }m>0.
        \end{cases}
    \end{equation}
    To handle the cases when $D$ is a square and nonsquare together, we have written $\textstyle{\sum_\gamma'}$ to indicate that the terms with $\gamma W_\ell \mathfrak a_j = \infty$ should be excluded when $D$ is a square.
    Then 
    \begin{equation}
        T_m(d,d') = \frac 12 \sum_{Q\in \Gamma_6\backslash \mathcal Q_D^{(1)}} \chi_d(Q) \sum_{\ell\mid 6} \mu(\ell)  \sideset{}{'}\sum_{\gamma\in \Gamma_\infty \backslash \Gamma_6} \int_{C_Q} f_{2,m}(\gamma W_\ell z,s) d(\gamma W_\ell z).
    \end{equation}
    For each $Q\in \mathcal Q_D$ let $\Gamma_Q$ denote the stabilizer of $Q$ (note that when $D$ is a square, $\Gamma_Q$ is trivial).
    Then
    \begin{equation}
        \sideset{}{'}\sum_{\substack{\gamma\in \Gamma_\infty \backslash \Gamma_6}} \int_{C_Q} f_{2,m}(\gamma W_\ell z,s) d(\gamma W_\ell z) = \sideset{}{'}\sum_{\substack{\gamma\in \Gamma_\infty \backslash \Gamma_6 / \Gamma_Q}} \int_{S_Q} f_{2,m}(\gamma W_\ell z,s) d(\gamma W_\ell z).
    \end{equation}
    Making the change of variable $\gamma W_\ell z \mapsto z$ we have
    \begin{equation}
        T_m(d,d') = \frac 12 \sum_{Q\in \Gamma_6\backslash \mathcal Q_D^{(1)}} \chi_d(Q) \sum_{\ell\mid 6} \mu(\ell)
        \sideset{}{'}\sum_{\substack{\gamma\in \Gamma_\infty \backslash \Gamma_6 / \Gamma_Q}} \int_{S_{\gamma W_\ell Q}} f_{2,m}(z,s) dz
    \end{equation}
    
    As explained in \cite{andersen-singular}, the map $(\gamma,\ell,Q)\mapsto \gamma W_\ell Q$ is a bijection from $\Gamma_\infty \backslash \Gamma_6 /\Gamma_Q \times \{1,2,3,6\} \times \Gamma_6\backslash \mathcal Q_D^{(1)}$ to $\Gamma_\infty \backslash \mathcal Q_D$, and
    $\chi_d(\gamma W_\ell Q) = \chi_d(Q)$.
    Furthermore, if $[a,b,c]=W_\ell Q$ for some $Q\in \mathcal Q_D^{(1)}$ then $\mu(\ell) = (\tfrac{12}b)$.
    When $D$ is not a square, each quadratic form $[a,b,c]$ in the set $\Gamma_\infty \backslash \mathcal Q_D$ has $a\neq 0$.
    Now suppose that $D$ is a square and let $Q\in \mathcal Q_D$.
    If $\mathfrak a_Q$ is one of the  roots of $Q(x,y)=0$ then $\gamma W_\ell \mathfrak a_Q = \mathfrak a_{\gamma W_\ell Q}$.
    Thus the bijection $(\gamma,\ell,Q)\mapsto \gamma W_\ell Q$ translates the condition $\gamma W_\ell \mathfrak a_j \neq 0$ to the condition $\mathfrak a_j \neq \infty$.
    The quadratic forms $[a,b,c]\in \Gamma_\infty \backslash \mathcal Q_D$ with one of $\mathfrak a_j=\infty$ are precisely those with $a=0$.
    Thus, for all $D$, we have
    \begin{equation}
        T_m(d,d') = \frac 12 \sum_{\substack{Q\in \Gamma_\infty \backslash \mathcal Q_D \\ Q=[a,b,c],a\neq 0}} \left(\frac{12}{b}\right)\chi_d(Q) \int_{S_Q} f_{2,m}(z,s) \, dz.
    \end{equation}

	Since $\chi_d(-Q) = -\chi_d(Q)$ and the geodesic $S_{-Q}$ is the same set as $S_Q$ but with opposite orientation, we have
	\begin{equation}
		T_m(d,d') = \sum_{\substack{Q\in \Gamma_\infty \backslash \mathcal Q_{D} \\ Q=[a,b,c], a>0}} \left(\frac{12}{b}\right) \chi_d(Q) \int_{S_Q} e(mx) \phi_{2,m}(y,s) \, dz.
	\end{equation}
	Each $S_Q$ with $Q=[a,b,c]$ and $a>0$ can be parametrized by
	\begin{equation}
		z  = \mathrm{Re} z_Q - e^{-i\theta} \mathrm{Im} z_Q, \qquad 0\leq \theta \leq \pi,
	\end{equation}
	where
	\begin{equation}
		z_Q = -\frac{b}{2a} + i\frac{\sqrt{D}}{2a}
	\end{equation}
	is the apex of the geodesic.
	Thus
	\begin{equation} \label{eq:phi-integral}
		\int_{S_Q} e(mx)\phi_{2,m}(y,s) \, dz = e\left(\frac{-mb}{2a}\right) H_m\left(\frac{\sqrt D}{2a}\right),
	\end{equation}
	where
	\begin{equation}
		H_m(t) = it \int_0^\pi e(-mt\cos\theta) \phi_{2,m}(t\sin\theta,s)e^{-i\theta} \, d\theta.
	\end{equation}
	It follows that
	\begin{equation} \label{eq:Tm-Hm-exp-sum}
		T_m(d,d') = \sum_{c=1}^\infty H_m\left(\frac{\sqrt D}{12c}\right) \sum_{\substack{b(12c) \\ b^2\equiv D(24c)}} \left(\frac{12}{b}\right)\chi_d\left(\left[6c,b,\tfrac{b^2-D}{24c}\right]\right) e\left(\frac{-mb}{12c}\right).
	\end{equation}
	
	We claim that the inner sum in \eqref{eq:Tm-Hm-exp-sum} equals zero if $m=0$.
	Indeed, let $r$ be an integer satisfying $(\tfrac{12}r)=-1$ and $r^2\equiv 1\pmod{24c(c,d)}$, and replace $b$ by $rb$ in the sum.
	By \cite[Lemma~3.1, P4]{andersen-singular} the $\chi_d$ factor is invariant under this change of variable. It follows that the sum equals zero.
	
	If $m\neq 0$ then by Lemma~7 of \cite{DIT-geometric} we have
	\begin{equation}
		H_m(t) = \frac{2\sqrt\pi \Gamma(\frac{s+1}{2}) t^{1/2}}{\Gamma(\frac s2)} J_{s-\frac 12} (2\pi|m|t).
	\end{equation}
	We finish the proof by applying Proposition~4.2
	of \cite{andersen-singular},
    which states that
    \begin{multline}
        \sum_{\substack{b(24c) \\ b^2\equiv D(24c)}} \left(\frac{12}{b}\right)\chi_d\left(\left[6c,b,\tfrac{b^2-D}{24c}\right]\right) e\left(\frac{-mb}{12c}\right)
        \\
        = 4\sqrt {-3i} \sum_{n\mid (m,c)} \left(\frac{12}{m/n}\right)\left(\frac{d}{n}\right) \sqrt{\frac nc} \, S\left(\frac{m^2}{24n^2}d+\frac{23}{24},\frac{d'+23}{24},c,\nu_\eta\right).
    \end{multline}
	(note that $\sqrt{-i}\,S(p_0,q_0,c,\nu_\eta) = K(p_0-1,q_0-1;c)$ in the notation of \cite{andersen-singular}).
	In that paper it is assumed that $(m,6)=1$, but that assumption is only used in one step of the proof, namely the key identity just above (4.13) in \cite{andersen-singular}.
	For $(m,6)>1$ that identity reads
	\begin{equation} \label{eq:key-ident}
	   \sum_{j(2u)} e\left(\frac{-d(3j^2+j)/2}{u}+\frac j2\right)
	    \left(e\left(\frac{m(6j+1)}{12u}\right)+e\left(\frac{-m(6j+1)}{12u}\right)\right)=0.
	\end{equation}
	By splitting the sum along $j=2\ell$ and $j=2\ell+1$ we see that \eqref{eq:key-ident} is equivalent to the identity
	\begin{equation}
	    \sum_{h\in \{\pm 1,\pm 7\}} \left(\frac{12}{h}\right) e\left(\frac{-d(\frac{h^2-1}{24})}{c} - \frac{hm}{12c}\right) \sum_{\ell(c)}e\left(\frac{-6d\ell^2+(dh+m)\ell}{c}\right)=0,
	\end{equation}
	which is Lemma~5.5 of \cite{AA}.
	This completes the proof.
\end{proof}

We are ready to prove Proposition~\ref{prop:shimura-maass}.
Let $C(s)$ be a function which is analytic in $\mathrm{Re}(s)>1$. 
Then by Proposition~\ref{eq:cycle-int-prop} we have
\begin{multline} \label{eq:cycle-int-Fm-eis}
    \sum_{Q\in \Gamma_6\backslash \mathcal Q_D^{(1)}} \chi_d(Q) \int_{C_Q} i \partial_z \left(F_{m,Q}^\ast(z,s) - C(s)F_{0,Q}^\ast(z,s)\right) \, dz \\= \frac{\Gamma(\frac{s+1}{2})}{\Gamma(\frac s2)}D^{\frac 14}  \sum_{n\mid m} \left(\frac{12}{m/n}\right) \left(\frac dn\right) n^{-\frac 12} R\left(\frac{m^2}{n^2}d,d',s\right).
\end{multline}
We would like to extend this identity to a neighborhood of $\mathrm{Re}(s)=\tfrac 12$ so that we can apply Proposition~\ref{prop:residue-Fm}.
Note that Corollary~3.6 of \cite{fay} shows that $R(p,q,s)$ has a meromorphic continuation to $\C$ with poles on the line $\mathrm{Re}(s)=\tfrac 12$.
If $D$ is not a square then each $C_Q$ is compact so \eqref{eq:cycle-int-Fm-eis} automatically holds in the region $\mathrm{Re}(s)>\tfrac 12-\ep$.
Suppose that $D$ is a square.
Then the Fourier expansion of $F_{m,Q}^\ast(z,s)-C(s)F_{0,Q}^\ast(z,s)$ at either of the endpoints of $C_Q$ is
\begin{equation}
    \pm\left(c_m(s) - C(s)c_0(s)\right)y^{1-s} + G(z,s),
\end{equation}
where $G(z,s)$ decays exponentially as $y\to\infty$.
Choosing $C(s) = c_m(s)/c_0(s)$, we find that the integrals in \eqref{eq:cycle-int-Fm-eis} are convergent for $\mathrm{Re}(s)>\tfrac 12-\ep$.
Furthermore, we have
\begin{align} \label{eq:residue-obs}
    \Res_{s=\frac 12+ir}\left(F_{m,Q}^\ast(z,s) - C(s)F_{0,Q}^\ast(z,s)\right)
    &= \Res_{s=\frac 12+ir} F_{m,Q}^\ast(z,s) \\
    &= \Res_{s=\frac 12+ir} F_{m}^\ast(z,s)
\end{align}
because $C(s)F_{0,Q}^\ast(z,s)$ is analytic at $s=\tfrac 12+ir$, $r\neq 0$, and because $F_{m,Q}^\ast(z,s)$ differs from $F_m^\ast(z,s)$ by an analytic function.

Corollary~3.6 of \cite{fay} shows that the poles of $(2s-1)R(p,q,s)$ are simple and lie at the points $\tfrac 12\pm ir_j$, where $r_j$ is the spectral parameter of $u_j$ as in Section~\ref{sec:kloo-coeffs}.
From Proposition~\ref{eq:cycle-int-prop}, \eqref{eq:residue-varphi}, and \eqref{eq:residue-obs} we have
\begin{multline} \label{eq:traces-residue}
    2 \sum_{\varphi\in \mathcal B_r^\ast} \langle\varphi,\varphi\rangle^{-1} a_\varphi(m) \sum_{Q\in \Gamma_6 \backslash \mathcal Q_D^{(1)}} \chi_d(Q) \int_{C_Q} i\partial_z\varphi(z) \, dz \\ = D^{\frac 14} \sum_{n\mid m} \left(\frac{12}{m/n}\right) \left(\frac dn\right) n^{-\frac 12} \mathrm{Res}_{s=\frac 12+ir} \frac{(2s-1)\Gamma(\frac{s+1}{2})}{\Gamma(\frac s2)}R\left(\frac{m^2}{n^2}d,d',s\right).
\end{multline}
To compute the residue on the right-hand side of \eqref{eq:traces-residue}, we will follow the argument given in Section~8 of \cite{DIT-geometric}.
We need Poincar\'e series for weight $1/2$ with multiplier system $\nu_\eta$ on $\Gamma_1 = \operatorname{PSL}_2(\Z)$.
These appear as Fourier coefficients of the resolvent kernel $G_{s,\frac 12}(z,z')$ from Theorem~3.1 of \cite{fay}.
For $m\equiv 1\pmod{24}$ and $\mathrm{Re}(s)>1$ define
\begin{equation}
    F_{\frac 12,\frac m{24}}(z,s) = \frac{6\Gamma(s-\frac 14\sgn (m))}{\pi |m|\Gamma(2s)} \sum_{\gamma\in \Gamma_\infty \backslash \Gamma_1} \nu_\eta^{-1}(\gamma) \left(\frac{cz+d}{|cz+d|}\right)^{-\frac 12} M_{\frac 14\sgn m, s-\frac 12}\left(\tfrac{\pi}{6} |m| \mathrm{Im}\gamma z\right) e\left( \tfrac 16 m\mathrm{Re}\gamma z\right).
\end{equation}
This is related to the Poincar\'e series $P_m(z,s)$ from Proposition~8 of \cite{AA-weak} by
\begin{equation}
    F_{\frac 12,\frac m{24}}(z,s) =  \left(\frac{6}{\pi|m|}\right)^{3/4} y^{1/4} \frac{\Gamma(s-\frac 14\sgn (m))}{\Gamma(2s)} P_m(z,s).
\end{equation}
Thus, by that proposition, we have
\begin{multline}
    F_{\frac 12,\frac{m}{24}}(z,s) = \frac{6\Gamma(s-\frac 14\sgn (m))}{\pi |m|\Gamma(2s)} M_{\frac 14\sgn m,s-\frac 12}\left(\tfrac{\pi}{6}|m|y\right) e\left(\tfrac{1}{24}mx\right) \\
    + \frac{6}{\sqrt{\pi}} \sum_{n \equiv 1(24)} |mn|^{-1/2}\frac{\Gamma(s-\frac 14\sgn (m))}{\Gamma(s+\frac 14\sgn (n))} R(m,n,2s-\tfrac 12) W_{\frac 14\sgn(n),s-\frac 12}\left(\tfrac{\pi}{6}|n|y\right) e\left(\tfrac{1}{24}nx\right),
\end{multline}
where $R(p,q,s)$ is defined in Propostion~\ref{eq:cycle-int-prop} for $p,q<0$, and is defined similarly in the other cases (we will not need the precise definition for those cases).
As in Proposition~\ref{prop:residue-Fm}, we have
\begin{equation}
    \Res_{s=\frac 12+\frac{ir}2}(2s-1)F_{\frac 12,\frac m{24}}(z,s) = \sum_{r_j=r/2} \bar{\mu}_j(m) u_j(z),
\end{equation}
where $u_j$, $\mu_j$, and $r_j$ are defined in Section~\ref{sec:kloo-coeffs}.
By equating Fourier coefficients, it follows that
\begin{equation}
    \frac{6}{\sqrt{\pi}}|mn|^{-\frac12}\Res_{s=\frac 12+\frac{ir}2}\frac{(2s-1)\Gamma(s-\frac 14\sgn (m))}{\Gamma(s+\frac 14\sgn (n))} R(m,n,2s-\tfrac 12) = \sum_{r_j=r/2}\bar\mu_j(m)\mu_j(n).
\end{equation}
Thus by \eqref{eq:traces-residue} we have
\begin{multline}
    \sum_{\varphi\in\mathcal B_r^*} \frac{a_\varphi(m)}{\langle \varphi,\varphi \rangle} \sum_{Q\in \Gamma_6\backslash \mathcal Q_D^{(1)}} \chi_d(Q) \int_{C_Q}i\partial_z\varphi(z_Q) \, dz 
    \\= \frac {\sqrt{\pi}}3 D^{\frac34} \sum_{r_j=r/2}  \mu_j(d')
    \sum_{n\mid m} m \left(\frac{12}{m/n}\right) \left(\frac dn\right) n^{-3/2}  \bar\mu_j\left(\tfrac{m^2}{n^2}d\right).
\end{multline}
By \eqref{eq:shimura-l-function-def} the inner sum above equals $\bar \mu_j(d)\bar a(m)$, where $a(m)$ is the $m$-th coefficient of $S_d(u_j)$.
Thus
\begin{equation} \label{eq:shimura=cycle-int}
    \frac {\sqrt{\pi}}3 D^{\frac34} \sum_{r_j=r/2} \mu_j(d') \bar \mu_j(d) S_d(u_j) = \sum_{\varphi\in\mathcal B_r^\ast}  \frac{\varphi}{\langle \varphi,\varphi \rangle} \sum_{Q\in \Gamma_6\backslash \mathcal Q_D^{(1)}} \chi_d(Q) \int_{C_Q}i\partial_z \varphi(z) \, dz,
\end{equation}
where $S_d(u_j)$ is defined in \eqref{eq:shimura-lift-def}.
This identity is valid for all negative $d,d'\equiv 1\pmod{24}$ such that $d$ is squarefree.
Thus, by the same argument given in Section~8 of \cite{biro-cycle} we find that $S_d(u_j) \in \mathcal S_0(6,\chi_0,r)$ and that $S_d(u_j)$ has eigenvalue $-1$ under both $W_2$ and $W_3$.
This completes the proof of Proposition~\ref{prop:shimura-maass}.

We now prove Proposition~\ref{prop:sum-walds}.
We rewrite the left-hand side of \eqref{eq:shimura=cycle-int} as
\begin{equation}
    \frac {\sqrt{\pi}}3 |D|^{\frac34} \sum_{\varphi\in\mathcal B_r^*} \varphi \sum_{S_d(u_j)=\varphi} \mu_j(d') \bar \mu_j(d).
\end{equation}
Then the linear independence of the $\varphi$ yields the formula
\begin{equation} \label{eq:muj-d-muj-dp-int}
    \frac {\sqrt{\pi}}3 |D|^{\frac34} \sum_{S_d(u_j)=\varphi} \mu_j(d') \bar \mu_j(d) 
    = 
    \langle \varphi,\varphi \rangle^{-1} \sum_{Q\in \Gamma_6\backslash \mathcal Q_D^{(1)}} \chi_d(Q) \int_{C_Q}i\partial_z \varphi(z) \, dz.
\end{equation}
    Suppose that $d'=d$.
	Then by Lemma~\ref{lem:square-quad-forms} the quadratic forms $[0,d,c]$ with $0\leq c<|d|$ form a complete set of representatives for $\Gamma\backslash \mathcal Q_{D}^{(1)}$.
	Furthermore,
	\begin{equation}
		\chi_d([0,d,c]) = \left(\frac dc\right).
	\end{equation}
	Following the normalization \eqref{eq:shimura-lift-def}, we write
	\begin{equation}
	    \varphi(z) = 2\sqrt{y}\sum_{n\neq 0}a(n) K_{ir}(2\pi|n|y)e(nx).
	\end{equation}
	A computation involving \cite[\S10.29, (10.30.2), and (10.40.2)]{dlmf} shows that
	\begin{equation}
		\partial_z \left[\sqrt y K_{ir}(2\pi|n|y)e(nx)\right] = \pi i n \sqrt y K_{ir}(2\pi |n|y)e(nx) + g(n,y)e(nx)
	\end{equation}
	for some function $g(n,y)$ which satisfies $g(-n,y)=g(n,y)$ and $g(n,y)\ll |n|^{1/2}e^{-2\pi|n|y}$ as $|n|y\to\infty$ and $g(n,y)\ll_n y^{-1/2}$ as $y\to 0$.
	So if $\mathrm{Re}(s)>1$ we have
	\begin{multline} \label{eq:mellin-transform-g-sum}
		\sum_{Q\in \Gamma\backslash \mathcal Q_{d^2}^{(1)}} \chi_d(Q) \int_{C_Q} i\partial_z\varphi(z) y^{s}\, dz 
		\\
		= -2\pi i\sum_{n\neq 0}n a(n) G(n,d) \int_0^\infty y^{s+\frac 12}K_{ir}(2\pi |n|y) \, dy - 2\sum_{n\neq 0}a(n) G(n,d) \int_0^\infty y^s g(n,y)\,dy,
	\end{multline}
	where $G(n,d)$ is the Gauss sum
	\begin{equation}
	    G(n,d) = \sum_{c\bmod |d|} \left(\frac{d}{c}\right) e\left(\frac{-nc}{d}\right) = i\left(\frac{d}{n}\right) \sqrt{|d|}.
	\end{equation}
	Since $a(-n)G(-n,d)g(-n,y) = -a(n)G(n,d)g(n,y)$, the second sum on the right-hand side of \eqref{eq:mellin-transform-g-sum} vanishes.
	By \cite[(10.43.19)]{dlmf} we have
	\begin{equation} \label{eq:K-integral}
		\int_0^\infty y^{s+\frac 12}K_{ir}(2\pi|n|y) \, dy = \tfrac 14 (\pi|n|)^{-s-\frac 32} \Gamma(\tfrac s2+\tfrac{ir}{2} + \tfrac 34) \Gamma(\tfrac s2-\tfrac{ir}{2} + \tfrac 34).
	\end{equation}
	It follows that
	\begin{equation}
		\sum_{Q\in \Gamma\backslash \mathcal Q_D} \chi_d(Q) \int_{C_Q} i\partial_z\varphi(z) y^{s}\, dz = \pi^{-s-\frac 12} \sqrt{|d|} \, \Gamma(\tfrac s2+\tfrac {ir}2+\tfrac 34)\Gamma(\tfrac s2-\tfrac {ir}2+\tfrac 34) L(s+\tfrac 12,\varphi\times\chi_d).
	\end{equation}
Setting $s=0$ and using \eqref{eq:muj-d-muj-dp-int}, we obtain \eqref{eq:sum-walds}.

\section{Cubic Moment and Subconvexity} \label{sec:subconvexity}

	Let $N \geq 1$ be a fixed integer. Denote by $\mathcal S_0(N)$ the space of weight $0$ Maass cusp forms of level $\Gamma_0(N)$ with a basis $\Bas_0(N)$ of eigenforms for Hecke operators. For a Maass form $\varphi \in \mathcal S_0(N)$, denote by $r = r(\varphi)$ the spectral parameter.
	The purpose of this section is to establish the following bound.
	
\begin{theorem} \label{thm:subconvexity}
	Let $T \gg 1$, and $q$ be a fundamental discriminant. Let $\chi_q$ be the corresponding quadratic Dirichlet character of modulus $\norm[q]$. Then we have for any $\ep > 0$
	$$ \sum_{\substack{\varphi \in \Bas_0(N) \\ T \leq \norm[r(\varphi)] < T+1}} \frac{L \left( \frac{1}{2},\varphi \times \chi_q \right)^3}{L(1,\varphi,\mathrm{Ad})} \ll_{\ep} (T \norm[q] N)^{\ep} T \lcm(\norm[q],N). $$
\label{CubMomentBd}
\end{theorem}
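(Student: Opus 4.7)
The plan is to apply the cubic moment method of Conrey--Iwaniec, in the hybrid spectral and $q$-aspect form developed by Young \cite{Yo17}, together with the adelic Motohashi-type spectral reciprocity formula established by the second author in \cite{Wu11}, which is what allows arbitrary level $N$ to be handled uniformly.

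The first step is to realize the spectral cube on the geometric side. Opening two of the three factors of $L(\tfrac12,\varphi\times\chi_q)^3$ by approximate functional equations and applying the Kuznetsov trace formula on $\Gamma_0(N)$, with spectral localization to the dyadic window $|r(\varphi)|\asymp T$, converts the sum over $\varphi\in\Bas_0(N)$ into a weighted sum of Kloosterman sums $S(m,n;c)$ with $N\mid c$, tested against an archimedean transform concentrated on $|r|\asymp T$. Using the Hecke relations to collapse $\lambda_\varphi(n_1)\lambda_\varphi(n_2)$, the surviving third $L$-factor becomes an arithmetic Dirichlet series twisted by $\chi_q$, so that the whole expression takes the shape of a sum of Kloosterman sums against $\chi_q$-twists of divisor-type coefficients.

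The crucial second step is to apply the reciprocity formula of \cite{Wu11}, which transforms the resulting arithmetic expression into a dual moment. Concretely, the cubic moment of $\chi_q$-twists of level-$N$ forms is recast as (essentially) a fourth-moment-type expression for Dirichlet $L$-functions of characters of modulus $\lcm(\norm[q],N)$ at analytic conductor roughly $T$. The dual side is then bounded by the standard Heath-Brown large-sieve fourth-moment inequality for Dirichlet $L$-functions, which produces the claimed bound $T\lcm(\norm[q],N)$ up to $(T\norm[q]N)^{\ep}$.

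The principal obstacle is uniformity at ramified primes, namely those dividing $\gcd(\norm[q],N)$: the local archimedean and non-archimedean integrals appearing in \cite{Wu11} must be analyzed so that the dual modulus comes out as $\lcm(\norm[q],N)$ rather than the naive $\norm[q]N$, as the latter would cost a factor of $\gcd(\norm[q],N)$ and destroy subconvexity in the $q$-aspect. A secondary difficulty is tracking the archimedean test function through the reciprocity so that the dyadic spectral localization at $T$ is preserved with no loss in the $T$-aspect, which is what ultimately yields the hybrid bound \eqref{eq:sub-intro} upon extracting a single term by positivity.
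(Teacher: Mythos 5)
Your high-level architecture matches the paper's: both rely on the Motohashi-type identity of \cite{Wu11} to convert the (weighted, spectrally localized) cubic moment into a fourth moment of Dirichlet $L$-functions, which is then handled by a hybrid large-sieve bound. However, as written your proposal has a genuine gap, and it is exactly at the point you yourself flag as ``the principal obstacle.'' In the paper the formula of \cite{Wu11} is not applied to an expression produced by approximate functional equations, Kuznetsov, and Hecke collapsing (that step is neither needed nor how the identity is used); instead one chooses factorizable Schwartz test data $\Psi=\otimes_v\Psi_v$ and the entire content of the proof is the local analysis: the explicit choice of $\Psi_p$ for $p\mid qN$ (a variant of the test vectors of Balkanova--Frolenkov--Wu with $p^{l_p}\parallel q$ possibly strictly smaller than $p^{n_p}\parallel\lcm(\norm[q],N)$), the lower bound $M_{3,p}(\Psi_p\mid\varphi_p)\gg p^{-n_p-l_p}$ (this is where $\lcm(\norm[q],N)$ comes from), the vanishing of the dual weights $M_{4,p}$ unless $\cond(\chi_p)\leq l_p$ together with the upper bounds $p^{-2l_p}$ (for $p\mid q$) and $p^{-n_p/2}$ (for $p\mid N$, $p\nmid q$), the archimedean weight that is nonnegative on the whole spectrum and bounded below on $T\leq\norm[r]<T+1$ (positivity is what lets you drop to the unit window), and the estimation of the degenerate terms $DS(\Psi)$, $DG(\Psi)\ll T^{1+\ep}\norm[q]^{-1+\ep}$. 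None of this is carried out in your proposal; it is only announced as something that ``must be analyzed.'' Since the theorem is precisely a statement about how these ramified local factors balance, announcing the difficulty does not substitute for the computation.

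Moreover, one concrete assertion in your sketch is off in a way that matters quantitatively: you say the dual side is a fourth moment of Dirichlet $L$-functions of characters of modulus $\lcm(\norm[q],N)$ at conductor about $T$. In the paper the non-archimedean dual weights force the dual characters to have conductor dividing $q$ (not $\lcm(\norm[q],N)$), and the additional saving $p^{-n_p/2}$ at primes $p\mid N$, $p\nmid q$ produces the factor $N_1^{1/2}/(\norm[q]N_1)$ in front of the fourth moment; only then does the large-sieve bound $(T\norm[q])^{1+\ep}$ fit under the target $T\,\lcm(\norm[q],N)$. If the dual family really had modulus $\lcm(\norm[q],N)$ as you state, the same large-sieve step would give $(T\,\lcm(\norm[q],N))^{1+\ep}$ and the resulting bound would exceed the claimed one by roughly a factor $N_0N_1^{1/2}$, destroying the hybrid subconvexity you want in the $q$-aspect when $\gcd(\norm[q],N)>1$ or $N$ is large. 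So the ramified local computation is not merely a technical uniformity issue to be checked at the end; it determines the shape of the dual moment itself, and your sketch currently assumes the wrong shape.
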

\begin{corollary}
	For the above $\varphi \in \Bas_0(N)$, we have the bound
	$$ L \left( \tfrac{1}{2}, \varphi \times \chi_q \right) \ll_{\ep} (|r_\varphi| \norm[q] N)^{\ep} \left( |r_\varphi| \mathrm{lcm}(\norm[q],N) \right)^{\frac{1}{3}}. $$
\end{corollary}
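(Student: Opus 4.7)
The plan is to deduce the pointwise bound from Theorem~\ref{thm:subconvexity} by the standard positivity-plus-single-term argument, using nonnegativity of the central value $L(\tfrac 12,\varphi\times\chi_q)$ together with a Hoffstein--Lockhart-type lower bound for the adjoint $L$-value $L(1,\varphi,\mathrm{Ad})$.

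First, I would observe that because $\chi_q$ is a real (quadratic) character and $\varphi$ is self-dual, the twisted $L$-function $L(s,\varphi\times\chi_q)$ is self-dual with real coefficients, so its central value is real. By Waldspurger's theorem (indeed by Proposition~\ref{prop:sum-walds} itself, which expresses this central value as a nonnegative real multiple of $\sum_{S_d(u_j)=\varphi}|\mu_j(d)|^2$ in the relevant case, and by the analogous result of Katok--Sarnak / Baruch--Mao more generally), one has
\begin{equation}
    L\left(\tfrac 12,\varphi\times\chi_q\right) \geq 0
\end{equation}
for every $\varphi\in\Bas_0(N)$. Hence every term on the left-hand side of Theorem~\ref{thm:subconvexity} is nonnegative.

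Next, given the specific $\varphi\in\Bas_0(N)$ in the corollary, I would choose $T$ with $T \leq |r_\varphi| < T+1$ (and $T\gg 1$; the range $|r_\varphi|\ll 1$ is handled by treating it as a bounded quantity, so that $(|r_\varphi|+1)\asymp 1$ and the claim reduces to the convexity bound with $r$-dependence absorbed). By nonnegativity, all other terms in the sum can be dropped, yielding
\begin{equation}
    \frac{L(\tfrac 12,\varphi\times\chi_q)^3}{L(1,\varphi,\mathrm{Ad})} \ll_{\ep} (|r_\varphi|\,\norm[q]\,N)^{\ep} \, |r_\varphi|\,\lcm(\norm[q],N).
\end{equation}
To convert this into a pointwise bound on $L(\tfrac 12,\varphi\times\chi_q)$ I would invoke the Hoffstein--Lockhart lower bound (in the form proved by Hoffstein--Lockhart with the appendix of Goldfeld--Hoffstein--Lieman, extended to Maass forms on $\Gamma_0(N)$)
\begin{equation}
    L(1,\varphi,\mathrm{Ad}) \gg_{\ep} (|r_\varphi|\,N)^{-\ep}.
\end{equation}
Substituting this lower bound, multiplying through, and taking cube roots gives
\begin{equation}
    L\left(\tfrac 12,\varphi\times\chi_q\right) \ll_{\ep} (|r_\varphi|\,\norm[q]\,N)^{\ep}\bigl(|r_\varphi|\,\lcm(\norm[q],N)\bigr)^{1/3},
\end{equation}
which is exactly the claim.

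The only genuine input here beyond the theorem itself is the nonnegativity of $L(\tfrac 12,\varphi\times\chi_q)$; the Hoffstein--Lockhart lower bound is by now standard and contributes only an $\ep$-power loss that is already absorbed into the $(|r_\varphi|\norm[q]N)^{\ep}$ factor. There is no real obstacle: the deduction is a one-line positivity argument once the cubic moment bound is in hand.
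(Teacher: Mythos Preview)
Your proposal is correct and is exactly the standard deduction the paper has in mind: the corollary is stated immediately after Theorem~\ref{thm:subconvexity} with no separate proof, so the intended argument is precisely the positivity-plus-single-term step you describe, together with the Hoffstein--Lockhart lower bound (which the paper cites as \cite{HL94} elsewhere). There is nothing to add.
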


	Our main tool is the version of Motohashi's formula in \cite{Wu11}, which we recall as follows. For a Hecke eigenform $\varphi$, we do not distinguish it from the irreducible representation $\pi$ generated by it. Hence $\varphi_v$ will mean $\pi_v$ if $\pi \simeq \otimes_v' \pi_v$. Let $S$ be a finite set of primes $p$. Let $\Psi_v \in \Sch(\Mat_2(\Q_v))$ be Schwartz functions at $v \in \{ \infty \} \cup S$. For any Hecke-Maass form $\varphi$ which is unramified at all $p \notin S$, we introduce for each $v \in S \cup \{ \infty \}$
	$$ M_{3,v}(\Psi_v \mid \varphi_v) = \sum_{e_1,e_2 \in \Bas(\varphi_v)} \Zeta_v \left( \tfrac{1}{2}, \Psi_v, \beta(e_2,e_1^{\vee}) \right) \Zeta \left( \tfrac{1}{2}, W_{e_1} \right) \Zeta \left( \tfrac{1}{2}, W_{e_2^{\vee}} \right), $$
	where 
\begin{itemize}	
	\item $\Bas(\varphi_v)$ is an orthogonal basis of $\varphi_v$; 
	\item for $e \in \Bas(\varphi_v)$, $e^{\vee}$ is the dual vector in the dual basis, where the implicit inner product is defined in the Kirillov model;
	\item $W_*$ is the Kirillov function of $*$ with respect to the standard additive character $\psi_v$ \`a la Tate;
	\item $\beta(e_2,e_1^{\vee})$ is the matrix coefficient related to $e_2$ and $e_1^{\vee}$;
	\item $\Zeta_v(s,\Psi_v,\beta)$ is the Godement-Jacquet zeta integral
	\[
	\Zeta_v(s,\Psi_v,\beta) = \int_{\GL_2(\F_v)} \Psi_v(g) \beta(g) \norm[\det g]_v^{s+\frac{1}{2}} dg
	\]
	and $\Zeta_v(s,W)$ is the standard local zeta integral
	\[
	    \Zeta_v(s,W)=\int_{\F_v^{\times}} W(t) \norm[t]_v^{s-\frac{1}{2}} d^{\times}t.
	\]
\end{itemize}
	With these local terms, we define
	$$ M_3(\Psi \mid \varphi) := \frac{3}{\pi} \frac{L \left( \frac{1}{2},\varphi \right)^3}{L(1, \varphi, \mathrm{Ad})} \cdot M_{3,\infty}(\Psi_{\infty} \mid \varphi_{\infty}) \prod_{p \in S} M_{3,p}(\Psi_p \mid \varphi_p) \frac{L(1, \varphi_p \times \bar{\varphi}_p)}{L\left( \frac{1}{2},\varphi_p \right)^3}. $$
	There is a counterpart for the Eisenstein series $M_3(\Psi \mid \chi,s)$. Basically its corresponding local terms $M_{3,v}(\Psi_v \mid \chi_v,s)$ are the same as $M_{3,v}(\Psi_v \mid \varphi_v)$ if $\varphi_v \simeq \pi(\chi_v\norm_v^s, \chi_v^{-1}\norm_v^{-s})$, except that we change the inner product structure to be defined in the induced model. Namely,
\begin{multline} 
	M_3(\Psi \mid \chi,s) = \frac{L\left( \frac{1}{2}+s,\chi \right)^3 L\left( \frac{1}{2}-s,\chi^{-1} \right)^3}{L(1+2s,\chi^2) L(1-2s,\chi^{-1})^2} \cdot M_3(\Psi_{\infty} \mid \chi_{\infty},s) \\ \times \prod_{p \in S} M_{3,p}(\Psi_p \mid \chi_p, s) \frac{L_p(1+2s,\chi_p^2) L_p(1-2s,\chi_p^{-2})}{L_p\left( \frac{1}{2}+s, \chi_p \right)^3 L_p\left( \frac{1}{2}-s, \chi_p^{-1} \right)^3}. 
\end{multline}
	For any Dirichlet character $\chi$ which is unramified at all $p \notin S$, we introduce for each $v \in S \cup \{ \infty \}$
	$$ M_{4,v}(\Psi_v \mid \chi_v) = \zeta_v(1)^4 \int_{\F_v^4} \chi_v \left( \frac{x_1 x_4}{x_2 x_3} \right) \frac{\sideset{}{_i} \prod dx_i}{\norm[x_1x_2x_3x_4]^{\frac{1}{2}}} $$
	and the corresponding global distribution
	$$ M_4(\Psi \mid \chi) := L(\tfrac 12,\chi)^2 L(\tfrac 12,\chi^{-1})^2 \cdot M_{4,\infty}(\Psi_{\infty} \mid \chi_{\infty}) \prod_{p \in S} M_{4,p}(\Psi_p \mid \chi_p) \frac{1}{L(\tfrac 12,\chi_p)^2 L(\tfrac 12,\chi_p^{-1})^2}. $$
	Write
	$$ M_3(\Psi) = \sum_\varphi M_3(\Psi \mid \varphi) + \sum_{\chi} \int_{-\infty}^{\infty} M_3(\Psi \mid \chi, ir) \frac{dr}{4\pi}, $$
	$$ M_4(\Psi) = \sum_{\chi} \int_{-\infty}^{\infty} M_4(\Psi \mid \chi \norm_{\A}^{ir}) \frac{dr}{2\pi}. $$
	Then the formula is
\begin{equation} 
	M_3(\Psi) + DS(\Psi) = M_4(\Psi) + DG(\Psi), 
\label{MF}
\end{equation}
	where the degenerate terms $DS(\cdot)$ and $DG(\cdot)$ are given by
\begin{equation} 
	DG(\Psi) = \Res_{s=\frac{1}{2}} M_4(\Psi \mid \norm_{\A}^s) - \Res_{s=-\frac{1}{2}} M_4(\Psi \mid \norm_{\A}^s), 
\label{DGDef}
\end{equation}
\begin{equation}
	DS(\Psi) = \Res_{s=\frac{1}{2}} M_3(\Psi \mid \id, s). 
\label{DSDef}
\end{equation}

	In order to apply the formula \eqref{MF} to our problem, let $S$ be the set of primes $p \mid qN$. Let $\chi_0$ be the Hecke character corresponding to $\chi_q$. We first specify $\Psi_p$ for $p \in S$ and give the relevant estimations of $M_{3,p}$ and $M_{4,p}$. The choice of $\Psi_p$ is a simple variant of those made in \cite[\S 1.4]{BFW21}. For integers $n \geq 0$, let
	$$ \gp{K}_0[p^n] := \left\{ \begin{pmatrix} a & b \\ c & d \end{pmatrix} \in \GL_2(\Z_p) \ \middle| \ c \in p^n \Z_p \right\}. $$
	For each $p \in S$, write $n_p \geq 1$ such that $p^{n_p} \parallel \mathrm{lcm}(q,N)$, and introduce
	$$ \phi_0(g) := \mathbbm{1}_{\gp{K}_0[\vp^{n_p}]}(g) \chi_{0,p}(\det g). $$
	Let integer $l_p \geq 0$ be such that $p^{l_p} \parallel q$ ($l_p \leq 1$ if $p \neq 2$). Then we take
	$$ \Psi_p = \left\{ \begin{matrix} \rpL_{n(p^{-l_p})} \rpR_{n(p^{-l_p})} \phi_0 & \text{if } p \mid q, \\ \phi_0 & \text{if } p \nmid q, \end{matrix} \right.  $$
	where $\rpL_{g_1} \rpR_{g_2} \Psi(x) := \Psi(g_1^{-1} x g_2)$. This choice differs from \cite[\S 1.4 \& \S 4]{BFW21} in that $l_p$ may be strictly smaller than $n_p$. We first treat the weight functions $M_{3,p}(\cdot)$. The argument is almost the same as \cite[Lemma 4.1]{BFW21}, hence we give a proof with minimal amount of details.
\begin{lemma}
	(1) The local weight $M_{3,p}(\Psi_p \mid \varphi_p) \neq 0$ only if $p^{n_p+1}$ does not divide the level of the form $\varphi \otimes \chi_0$. In this case, we have uniformly in $p$
	$$ M_{3,p}(\Psi_p \mid \varphi_p) \gg p^{-n_p-l_p}, $$
	where the implicit constant depends only on a constant towards the Ramanujan-Petersson conjecture.
	
\noindent (2) If $p \mid q$ but $p \nmid N$, then we have $M_{3,p}(\Psi_p \mid \id, s) = 0$.

\noindent (3) If $p \mid N$, then we have $M_{3,p}(\Psi_p \mid \id, s) \neq 0$ only if $n_p \geq 2l_p$. Under this condition, we have
	$$ \extnorm{ \left. \frac{\partial^n}{\partial s^n} \right|_{s=\frac{1}{2}} M_{3,p}(\Psi_p \mid \id, s) \zeta_p(1+2s) \zeta_p(1-2s) } \ll_n p^{1-n_p-l_p} \log^n p. $$
\label{WtLocBd}
\end{lemma}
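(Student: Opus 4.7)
The plan is to follow closely the argument of [BFW21, Lemma 4.1], adapting it to the situation where $l_p$ can be strictly smaller than $n_p$. The overall strategy is to unfold the Godement-Jacquet zeta integral $\Zeta_p(\tfrac{1}{2},\Psi_p,\beta(e_2,e_1^\vee))$ using the Bruhat decomposition, exploit the left/right $\gp{K}_0[p^{n_p}]$-invariance (up to the central character twist) of $\phi_0$, and then explicitly evaluate on the new vector of the representation $\varphi_p \otimes \chi_{0,p}^{-1}$.

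For part (1), I first note that $\phi_0$ is a matrix coefficient projector onto vectors on which $\gp{K}_0[p^{n_p}]$ acts by $\chi_{0,p}\circ \det$. Hence the zeta integral is nonzero only if $\varphi_p \otimes \chi_{0,p}^{-1}$ (equivalently $\varphi_p \otimes \chi_{0,p}$, since $\chi_{0,p}$ is quadratic) contains a nonzero $\gp{K}_0[p^{n_p}]$-fixed vector, which by Casselman's theory is exactly the condition that $p^{n_p+1}$ does not divide the conductor of $\varphi \otimes \chi_0$. The left-right translations by $n(p^{-l_p})$ move the implicit new vector to a fixed translate, whose pairing against itself via the matrix coefficient and the Kirillov-model zeta integrals are standard local integrals, bounded below by an explicit power of $p$. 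Combining the $L$-factor ratio $L(1,\varphi_p\times \bar\varphi_p)/L(\tfrac{1}{2},\varphi_p)^3$ — which is bounded below in terms of the Ramanujan bound — with the resulting matrix-coefficient integral produces the claimed lower bound $p^{-n_p - l_p}$, where the two factors of $p^{-n_p/2}$ come from the volume of $\gp{K}_0[p^{n_p}]$ and the factor $p^{-l_p}$ is the net shift created by the $n(p^{-l_p})$-translates.

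For parts (2) and (3), the representation $\varphi_p$ is replaced by the principal series $\pi(\chi_p\norm^s,\chi_p^{-1}\norm^{-s})$ with $\chi = \id$. Unfolding as above but in the induced model, one sees the integral becomes a local inner product involving $\chi_{0,p}$. When $p \mid q$ and $p\nmid N$, we have $l_p = n_p$ and $\chi_{0,p}$ is ramified of conductor exactly $p^{n_p}$ while the Eisenstein section is spherical; the orthogonality of characters on $\vo_p^\times$ then forces the integral to vanish, giving part (2). For part (3), $p \mid N$, and the same orthogonality forces a nonvanishing condition that translates to $n_p \ge 2l_p$. Under this condition the weight, multiplied by $\zeta_p(1+2s)\zeta_p(1-2s)$ to remove the obvious poles, becomes a finite sum of polynomials in $p^{\pm s}$ with controlled coefficients, and its $n$-th $s$-derivative at $s=\tfrac{1}{2}$ is bounded by the claimed $p^{1-n_p-l_p}\log^n p$ through direct estimation.

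The main obstacle is the explicit computation of the Godement-Jacquet matrix-coefficient zeta integral after the left-right conjugation by $n(p^{-l_p})$: one must carefully decompose $\GL_2(\Q_p)$ using Bruhat, locate the support of $\rpL_{n(p^{-l_p})}\rpR_{n(p^{-l_p})} \phi_0$ inside $n(\Q_p)\overline{n}(\Q_p)\grT(\Q_p)$, and track how this support intersects the translates of the new vector and its dual. The bookkeeping of the $p$-power factors arising from this change of variables, together with the proper normalization of the basis $\Bas(\varphi_p)$ using the Kirillov inner product, is where the slight deviation from [BFW21] — namely $l_p < n_p$ — forces genuine modifications rather than a direct citation.
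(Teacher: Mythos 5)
Your route is the same one the paper takes (both follow the scheme of Lemma~4.1 of \cite{BFW21}: unfold, use that $\phi_0$ projects onto the $\chi_{0,p}\circ\det$-isotypic vectors for $\gp{K}_0[p^{n_p}]$, read off the conductor criterion, then compute on distinguished vectors), but your sketch of part~(1) has a genuine gap at the lower-bound step. After applying the projector, $M_{3,p}(\Psi_p\mid\varphi_p)$ is not a single newvector term: it equals $\Vol(\gp{K}_0[p^{n_p}])$ times a sum over an orthogonal basis $\Bas_0$ of the \emph{whole} $\gp{K}_0[p^{n_p}]$-isotypic subspace of $\varphi_p\otimes\chi_{0,p}$, and this subspace contains oldvectors whenever $\cond(\varphi_p\otimes\chi_{0,p})<n_p$, which the hypotheses allow. ``Explicitly evaluating on the new vector'' therefore does not by itself bound the sum from below, since a priori the oldvector terms could cancel the newvector contribution. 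The paper's proof hinges on the observation that each summand is a product of the two Rankin--Selberg zeta integrals $\Zeta\bigl(\tfrac12,\chi_{0,p},n(p^{-l_p}).W_e\bigr)$ and $\Zeta\bigl(\tfrac12,\chi_{0,p}^{-1},n(p^{-l_p}).W_{e^{\vee}}\bigr)$ attached to a vector and its dual, hence is non-negative, so that all terms except $e=e_0$ may be discarded; your proposal supplies neither this positivity nor any substitute for it, and without it the claimed bound $\gg p^{-n_p-l_p}$ does not follow.

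Two smaller points. First, you locate the Ramanujan dependence in the ratio $L(1,\varphi_p\times\bar\varphi_p)/L(\tfrac12,\varphi_p)^3$, but that ratio is part of the global normalization of $M_3(\Psi\mid\varphi)$, not of the local weight the lemma bounds; in the paper the $\theta$-dependence enters inside $M_{3,p}$ itself, through the Kirillov-model normalization $\Norm[W_0]^2\ll_{\theta}1$ of the newvector/dual pair and, in the case $p\mid N$, $p\nmid q$ (so $l_p=0$ and there is no $n(p^{-l_p})$-shift), through the lower bound $L(\tfrac12,\pi_p\otimes\chi_{0,p})\gg_{\theta}1$ --- a case split your sketch does not make. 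Second, no Bruhat-decomposition support analysis is needed for part~(1): for $l_p>0$ the shifted newvector zeta integral is evaluated exactly, $\extnorm{\Zeta(\tfrac12,\chi_{0,p},n(p^{-l_p}).W_0)}=p^{-l_p/2}(1-p^{-1})^{-1}$, via Proposition~4.6 of \cite{Wu14}. Your treatment of parts~(2) and (3) (conductor criterion giving vanishing when $n_p=l_p\geq 1$ and nonvanishing only if $n_p\geq 2l_p$, then direct estimation of the Eisenstein weight) does match the paper, except that the paper obtains the part~(3) bound from an explicit evaluation of $W_k(\id)W_k^{\vee}(\id)$ for the finitely many basis vectors $e_k$ rather than from a general ``polynomial with controlled coefficients'' assertion, which as stated is not yet a proof.
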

\begin{proof}
	(1) Let $\pi$ be the global representation of $\GL_2(\A)$ corresponding to the newform $\varphi$. Let $(\pi_p, V_p)$ be the local component of $\pi$ at $p$. As in the proof of \cite[Lemma 4.1]{BFW21}, we have $\pi_p(\phi_0) \neq 0$ only if $V_p$ contains a non zero vector $e$ satisfying
\begin{equation} 
	\pi_p \begin{pmatrix} a & b \\ c & d \end{pmatrix}.e = \chi_{0,p}(ad) e, \quad \forall \begin{pmatrix} a & b \\ c & d \end{pmatrix} \in \gp{K}_0[p^{n_p}]. 
\end{equation}
	This is equivalent to $\cond(\pi_p \otimes \chi_{0,p}) \leq n_p$. Let $\Bas_0$ be an orthogonal basis of the subspace of $V_p \otimes \chi_{0,p}$ of vectors satisfying 
	$$ (\pi_p \otimes \chi_{0,p}) \begin{pmatrix} a & b \\ c & d \end{pmatrix}.e = e, \quad \forall \begin{pmatrix} a & b \\ c & d \end{pmatrix} \in \gp{K}_0[p^{n_p}], $$
	which contains the newvector $e_0$. This basis determines a dual basis $\Bas_0^{\vee}$ in the dual representation $(\pi_p^{\vee} \otimes \chi_{0,p}, V_p^{\vee} \otimes \chi_{0,p})$, such that $e^{\vee}$ is the dual element of $e$ so that
	$$ (e^{\vee},e)=1, \quad (e^{\vee},e') = 0, \forall e \neq e' \in \Bas_0. $$
	For $e$ or $e^{\vee}$, let $W_e$ or $W_{e^{\vee}}$ be the corresponding Kirillov function of $e$ or $e^{\vee}$ with respect to the standard additive character $\psi_p$ resp. $\psi_p^{-1}$. The above local pairing is defined in the Kirillov model. Then we have
	$$ M_{3,p}(\Psi_p \mid \varphi_p) = \Vol(\gp{K}_0[p^{n_p}]) \sum_{e \in \Bas_0} \Zeta \left( \tfrac{1}{2}, \chi_{0,p}, n(p^{-l_p}).W_e \right) \Zeta \left( \tfrac{1}{2}, \chi_{0,p}^{-1}, n(p^{-l_p}).W_{e^{\vee}} \right), $$
	where $\Zeta(\cdot)$ is the local Rankin-Selberg zeta functional for $\GL_2 \times \GL_1$. Each summand on the right hand side is non-negative. We drop all but the term corresponding to $e=e_0$. Writing $W_0=W_{e_0}$, such that $\mathrm{supp}(W_0) \subset \Z_p$ and $W_0(1)=1$. If $l_p > 0$, then we have by \cite[Proposition 4.6]{Wu14}
	$$ \extnorm{ \Zeta \left( \tfrac{1}{2}, \chi_{0,p}, n(p^{-l_p}).W_0 \right) } = p^{-\frac{l_p}{2}}(1-p^{-1})^{-1}, \quad \Norm[W_0]^2 \ll_{\theta} 1 $$
	where $\theta$ is any constant towards the Ramanujan-Petersson conjecture. While if $l_p = 0$, then we have
	$$ \extnorm{ \Zeta \left( \tfrac{1}{2}, \chi_{0,p}, W_0 \right) } = \extnorm{ L\left( \tfrac{1}{2}, \pi_p \otimes \chi_{0,p} \right) } \gg_{\theta} 1. $$
	The desired bound follows readily.

\noindent (2) This follows from $\cond(\pi(\norm_p^s, \norm_p^{-s}) \otimes \chi_{0,p}^{-1}) = 2 \cond(\chi_{0,p}) > 0$.

\noindent (3) For simplicity, we only treat the case $l \geq 1$. For $\pi_p = \pi(\norm_p^s, \norm_p^{-s})$, The above $\Bas_0$ can be chosen as $\{ e_k : 0 \leq k \leq n_p \}$, where $e_k$ can be written as linear combination of $a(p^{-l})$ translations of $e_0$ as in \cite[Lemma 2.18 (i)]{Wu2}. Let $W_k$ resp. $W_k^{\vee}$ be the Kirillov function of $e_k$ (resp. $e_k^{\vee}$) with respect to $\psi_p$ (resp. $\psi_p^{-1}$). We deduce
	$$ W_0(\id) W_0^{\vee}(\id) = (1-p^{-1-2s})(1-p^{-1+2s}), \quad W_1(\id) W_1^{\vee}(\id) = (p^s+p^{-s})^2, $$
	$$ W_2(\id) W_2^{\vee}(\id) = p^{-2} \frac{1+p^{-1}}{1-p^{-1}}, \quad W_k(\id)W_k^{\vee}(\id)=0, k \geq 3. $$
	The above formula for $M_{3,p}$ can be rewritten via \cite[Proposition 4.6]{Wu14} as
	$$ M_{3,p}(\Psi_p \mid \id,s) = \Vol(\gp{K}_0[p^{n_p}]) p^{-l_p}(1-p^{-1})^2 \sum_{k=0}^{n_p} W_k(\id) W_k^{\vee}(\id). $$
	The desired bound follows readily.
\end{proof}

	The dual weight $M_{4,p}(\Psi_p \mid \chi_p)$ is given by the formula
\begin{align}
	\frac{M_{4,p}(\Psi_p \mid \chi_p)}{\zeta_p(1)^4} &= \int_{\Q_p^4} \phi_0 \begin{pmatrix} x_1 & x_2 \\ x_3 & x_4 \end{pmatrix} \chi_p \left( \frac{(x_1+p^{-l_p}x_3) (x_4 - p^{-l_p}x_3)}{(x_2 - p^{-l_p}(x_1-x_4) - p^{-2l_p}x_3) x_3} \right) \nonumber \\
	&\quad \times\frac{\prod dx_i}{\extnorm{ (x_1+p^{-l_p}x_3) (x_4 - p^{-l_p}x_3) (x_2 - p^{-l_p}(x_1-x_4) - p^{-2l_p}x_3) x_3 }^{\frac{1}{2}}}. \label{DWtFNA}
\end{align}
	In the rest of this paragraph, we omit the subscript $p$ for simplicity of notation. Considering the change of variables
	$$ x_3 \mapsto x_3(1+\delta), \quad x_1 \mapsto x_1 - p^{-l} \delta x_3, \quad x_2 \mapsto x_2 - p^{-2l} \delta x_3, \quad x_4 \mapsto x_4 + p^{-l} \delta x_3 $$
	for any $\delta \in p^l \Z_p$, we get $M_4(\Psi \mid \chi) = \chi(1+\delta)^{-1} M_4(\Psi \mid \chi)$. Thus 
	$$ M_4(\Psi \mid \chi) \neq 0 \quad \Rightarrow \quad m:=\cond(\chi) \leq l. $$
	Note that $\phi_0$ is given in the coordinates of the Bruhat decomposition as
	$$ \phi_0 \left( \begin{pmatrix} z & \\ & z \end{pmatrix} \begin{pmatrix} 1 & \\ x & 1 \end{pmatrix} \begin{pmatrix} 1 & y \\ & 1 \end{pmatrix} \begin{pmatrix} u & \\ & 1 \end{pmatrix} \right) = \mathbbm{1}_{(\Z_p^{\times})^2}(z,u) \mathbbm{1}_{\Z_p^2}(p^{-n}x,y) \chi_0(u)^{-1}. $$
	Making the change of variables
	$$ \begin{pmatrix} x_1 & x_2 \\ x_3 & x_4 \end{pmatrix} = \begin{pmatrix} z & \\ & z \end{pmatrix} \begin{pmatrix} 1 & \\ x & 1 \end{pmatrix} \begin{pmatrix} 1 & y \\ & 1 \end{pmatrix} \begin{pmatrix} u & \\ & 1 \end{pmatrix} = \begin{pmatrix} zu & zy \\ zux & z(1+xy) \end{pmatrix} $$
	whose Jacobian is equal to $\norm[z^3u]$, then $x \mapsto p^nx$, we get
\begin{align*}
	\frac{M_4(\Psi \mid \chi)}{\zeta_p(1)^4} &= p^{-n} \int_{(\Z_p^{\times})^2} \int_{\Z_p^2} \chi \left( \frac{(1+p^{n-l}x) (1-p^{n-l}x(u-p^ly))}{p^{n-l}x (1-(1+p^{n-l}x)(u-p^l y))} \right) \chi_0(u)^{-1}  \\
	&\quad \times\frac{\norm[z] dz du dx dy}{\extnorm{ p^{n-l}x (1+p^{n-l}x) (1-p^{n-l}x(u-p^ly)) (1-(1+p^{n-l}x)(u-p^l y)) }^{\frac{1}{2}}}.
\end{align*}
	(1) If $n=l \geq 1$, then the above integral is exactly the one studied in \cite[\S 4]{BFW21}, and we get uniformly
\begin{equation} 
	\extnorm{ M_4(\Psi \mid \chi) } \ll p^{-2l}. 
\label{DualWtBdq}
\end{equation}
	(2) If $n > l \geq 1$, then by the change of variable $u \mapsto u+p^ly$ we can further simplify
	$$ \frac{M_4(\Psi \mid \chi)}{\zeta_p(1)^4} = \frac{p^{-n}}{\zeta_p(1)} \int_{\Z_p^{\times}} \int_{\Z_p} \chi \left( \frac{ (1+p^{n-l}x) (1-p^{n-l} xu) }{ p^{n-l}x (1-(1+p^{n-l}x)u) } \right) \chi_0(u)^{-1} \frac{dudx}{\extnorm{ p^{n-l}x  (1-(1+p^{n-l}x)u) }^{\frac{1}{2}}}. $$
	Applying the consecutive changes of variables $u \mapsto u^{-1}$ and $u \mapsto 1+p^{n-l}x+y$ gives
\begin{align*}
	\frac{M_4(\Psi \mid \chi)}{\zeta_p(1)^4} &= \frac{p^{-n}}{\zeta_p(1)} \int_{\Z_p^2} \mathbbm{1}_{\Z_p^{\times}}(1+y) \chi \left( \frac{ (1+p^{n-l}x) (1+y) }{p^{n-l}x y} \right) \chi_0(1+p^{n-l}x+y) \frac{dxdy}{\extnorm{ p^{n-l}x y }^{\frac{1}{2}}} \\
	&= \frac{p^{-l}}{\zeta_p(1)} \sum_{k \geq n-l, \ell \geq 0} \int_{p^k \Z_p^{\times} \times p^{\ell} \Z_p^{\times}} \mathbbm{1}_{\Z_p^{\times}}(1+y) \chi \left( \frac{ (1+x) (1+y) }{x y} \right) \chi_0(1+x+y) \frac{dxdy}{\norm[xy]^{\frac{1}{2}}} \\
	&=: \frac{p^{-l}}{\zeta_p(1)} \sum_{k \geq n-l, \ell \geq 0} M_4^{k,\ell}(\chi_0 \mid \chi),
\end{align*}
	where the integrals $M_4^{k,\ell}$ are the same as those defined in \cite[(4.4)]{BFW21}. In other words, the above integral is simply a partial one of the integral studied in the previous case, whose proof actually goes by bounding each summand $M_4^{k,\ell}$. We conclude the bound \eqref{DualWtBdq} in this case, too. \\
	(3) If $l=0$ (and $n \geq 1$), we take the definition of the dual weight formula \eqref{DWtFNA} to see
	$$ \frac{M_{4}(\Psi \mid \chi)}{\zeta_p(1)^4} = \int_{\Q_p^4} \mathbbm{1}_{\gp{K}_0[p^n]} \begin{pmatrix} x_1 & x_2 \\ x_3 & x_4 \end{pmatrix} \chi \left( \frac{x_1x_4}{x_2x_3} \right) \frac{\prod dx_i}{\norm[x_1x_2x_3x_4]^{\frac{1}{2}}}, $$
	which is non-vanishing only if $\cond(\chi)=0$, in which case
\begin{equation} 
	M_4(\Psi \mid \chi) = \int_{\Z_p} \chi(x_2)^{-1} \norm[x_2]^{\frac{1}{2}} d^{\times}x_2 \int_{p^n \Z_p} \chi(x_3)^{-1} \norm[x_3]^{\frac{1}{2}} d^{\times}x_3 \asymp p^{-\frac{n}{2}}. 
\label{DualWtBdN}
\end{equation}

\begin{lemma}
	(1) The local weight $M_{4,p}(\Psi_p \mid \chi_p)$ is non vanishing only if $\cond(\chi_p) \leq l_p$. Under this condition, we have uniformly for unitary $\chi_p$
	$$ \extnorm{ M_{4,p}(\Psi_p \mid \chi_p) } \ll \left\{ \begin{matrix} p^{-2l_p} & \text{if } l_p > 0 \\ p^{-\frac{n_p}{2}} & \text{if } l_p=0 \end{matrix} \right. . $$
	
\noindent (2) For any integer $n \geq 0$, we have
	$$ \extnorm{ \left. \frac{\partial^n}{\partial s^n} \right|_{s=\pm \frac{1}{2}} M_{4,p}(\Psi_p \mid \norm_p^s) \,\zeta_p \left( \tfrac{1}{2}+s \right)^{-2} \zeta_p \left( \tfrac{1}{2}-s \right)^{-2} } \ll_n p^{-l_p} \log^n p. $$
\label{DualWtLocBd}
\end{lemma}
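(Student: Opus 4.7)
The plan is to reduce both parts of the lemma to the explicit integral computations already carried out above in the three cases $n_p = l_p \geq 1$, $n_p > l_p \geq 1$, and $l_p = 0$, and then, for the Eisenstein case (2), to invoke the observation that the $\zeta_p$-normalization $\zeta_p(1/2+s)^{-2}\zeta_p(1/2-s)^{-2}$ precisely cancels the poles produced at $s = \pm 1/2$.

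For (1), I would first note that the conductor restriction $\cond(\chi_p) \leq l_p$ has already been derived from the invariance $M_4(\Psi \mid \chi) = \chi(1+\delta)^{-1} M_4(\Psi \mid \chi)$ under shifts $\delta \in p^{l_p}\Z_p$ just before the case analysis. The three cases then supply the bounds: the cases $n_p = l_p \geq 1$ and $n_p > l_p \geq 1$ both reduce the dual-weight integral to the family of sums $M_4^{k,\ell}(\chi_0 \mid \chi)$ of \cite[(4.4)]{BFW21}, and in both cases the bound $\ll p^{-2l_p}$ arises from the bound on each summand established there. The case $l_p = 0$ is \eqref{DualWtBdN}, which gives the sharper asymptotic $\asymp p^{-n_p/2}$ once non-vanishing forces $\chi_p$ to be unramified.

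For (2), I would substitute $\chi_p = \norm_p^s$, which is unramified so that the non-vanishing condition is automatic. In the regime $l_p = 0$, plugging $\chi_p = \norm_p^s$ into \eqref{DualWtBdN} expresses $M_{4,p}(\Psi_p \mid \norm_p^s)$ as an explicit product of $\zeta_p(1/2 - s)$ factors and a power $p^{-n_p(1/2-s)}$. In the regime $l_p \geq 1$, I would evaluate each $M_4^{k,\ell}(\chi_0 \mid \norm_p^s)$ as a geometric series in $p^{-s}$ and sum over $k \geq n_p - l_p$ and $\ell \geq 0$, producing a meromorphic function of $s$ with at-most-double poles at $s = \pm 1/2$. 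In both regimes, multiplying by $\zeta_p(1/2+s)^{-2}\zeta_p(1/2-s)^{-2}$ yields a function holomorphic near $s = \pm 1/2$; each derivative in $s$ then extracts at most $\log p$ from the zeta factors (and their reciprocals), giving the claimed bound $\ll_n p^{-l_p}\log^n p$.

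The hardest part will be the explicit bookkeeping in the sub-case $l_p \geq 1$, $n_p > l_p$: summing $M_4^{k,\ell}(\chi_0 \mid \norm_p^s)$ over the prescribed range and identifying the $\zeta_p(1/2 \pm s)$ singular factors in a form that makes the cancellation by the normalization visible uniformly in $n_p$, $l_p$, and $s$ near $\pm 1/2$. The corresponding calculation at $n_p = l_p$ in \cite[\S 4]{BFW21} should serve as a template for the explicit evaluation.
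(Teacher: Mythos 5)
Your proposal matches the paper's own proof: part (1) is exactly the summary of the case analysis (conductor condition from the $\delta$-shift invariance, the bounds \eqref{DualWtBdq} and \eqref{DualWtBdN} in the three regimes), and part (2) is the substitution $\chi_p=\norm_p^s$ into the same computation, with the paper delegating the explicit pole-cancellation bookkeeping to \cite[\S 5]{BFW21} rather than redoing it. No essential difference in route.
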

\begin{proof}
	(1) This is just a summary of the above discussion.
	
\noindent (2) We simply replace $\chi$ by $\norm_p^s$ in the above discussion to find precise formula of the term on the left hand side. The details can be found in \cite[\S 5]{BFW21}.
\end{proof}

	At the infinite place, if $\varphi_{\infty}$ has spectral parameter $r$, then we can choose for $T \geq 1$ and $\Delta = T^{\ep}$
	$$ M_{3,\infty}(\Psi_{\infty} \mid \varphi_{\infty}) = \sqrt{\pi} \frac{\cosh(\pi r)}{2 \Delta} \left\{ \exp \left( - \frac{(r-T)^2}{2\Delta^2} - \frac{\pi}{2} r \right) + \exp \left( - \frac{(r+T)^2}{2\Delta^2} + \frac{\pi}{2} r \right) \right\}^2. $$
	This is a positive weight function, which approximates the characteristic function of the interval $[T-\Delta, T+\Delta]$. The dual weight $M_{4,\infty}(\Psi_{\infty} \mid \chi_{\infty})$ is studied in \cite[\S 3.3]{BFW21}, which is bounded as
	$$ \extnorm{ M_{4,\infty}(\Psi_{\infty} \mid \chi_{\infty}) } \ll \left\{ \begin{matrix} 1 & \text{for all } \chi_{\infty}, \\ (1+\norm[T])^{-A} & \text{for any } A \text{ and } \chi_{\infty}(t)=t^{ix} \text{ with } t>0, x \geq (1+\norm[T]) \log(1+\norm[T]). \end{matrix} \right.  $$
\begin{lemma}
	The above local weight $M_{3,\infty}(\Psi_{\infty} \mid \varphi_{\infty})$ satisfies:
\begin{itemize}
	\item[(1)] It is non negative for all unitary $\varphi_{\infty}$.
	\item[(2)] For $\varphi_{\infty}$ with spectral parameter $r$ such that $T-1 \leq \norm[r] \leq T+1$, we have $M_{3,\infty}(\Psi_{\infty} \mid \varphi_{\infty}) \gg_{\ep} T^{-\ep}$.
\end{itemize}
	The dual weight satisfies
	$$ \extnorm{ M_{4,\infty}(\Psi_{\infty} \mid \chi_{\infty}) } \ll \left\{ \begin{matrix} 1 & \text{for all unitary } \chi_{\infty}, \\ \norm[x]^{-A} & \text{for any } A>1, \chi_{\infty}(t)=t^{ix} \text{ with } t>0, x \geq \norm[T] \log^2 \norm[T]. \end{matrix} \right.  $$
\end{lemma}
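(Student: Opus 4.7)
My plan is to handle the two halves of the lemma separately. For the statements on $M_{3,\infty}$, everything follows by direct inspection of the explicit formula for $M_{3,\infty}(\Psi_\infty \mid \varphi_\infty)$ displayed just before the lemma; for the dual weight $M_{4,\infty}$, I would defer to the analysis already carried out in \cite[\S 3.3]{BFW21}.

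For (1) and (2), I would set
\[
A := \exp\!\left(-\tfrac{(r-T)^2}{2\Delta^2} - \tfrac{\pi r}{2}\right), \qquad B := \exp\!\left(-\tfrac{(r+T)^2}{2\Delta^2} + \tfrac{\pi r}{2}\right),
\]
so that $M_{3,\infty}(\Psi_\infty \mid \varphi_\infty) = \tfrac{\sqrt\pi}{2\Delta}\cosh(\pi r)(A+B)^2$. A unitary weight-zero Maass form has spectral parameter either real (principal series) or purely imaginary with $\norm[r]<\tfrac12$ (complementary series), so $\cosh(\pi r)>0$ in both cases. The involution $r \mapsto -r$ exchanges $A$ and $B$: when $r$ is real both are real and positive, while when $r \in i\R$ a short calculation gives $B = \bar{A}$, so that $A+B = 2\,\mathrm{Re}(A) \in \R$. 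Either way $(A+B)^2 \geq 0$, proving (1). For (2), assume $T-1 \leq \norm[r] \leq T+1$; by the $r \mapsto -r$ symmetry of the formula I may take $r \geq 0$. Then
\[
A \geq \exp\!\left(-\tfrac{1}{2\Delta^2} - \tfrac{\pi r}{2}\right) \gg e^{-\pi r/2}
\]
since $\Delta = T^{\ep} \to \infty$. Discarding $B \geq 0$ and combining with $\cosh(\pi r) \geq \tfrac12 e^{\pi r}$ gives $M_{3,\infty} \gg \Delta^{-1} = T^{-\ep}$, as required.

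For the dual weight, the uniform bound $\norm[M_{4,\infty}(\Psi_\infty \mid \chi_\infty)] \ll 1$ is already recorded (with its proof) in the paragraph immediately preceding the lemma. For the rapid decay when $\chi_\infty(t) = t^{ix}$ with $x$ large, the threshold $x \geq \norm[T]\log^2 \norm[T]$ claimed in the lemma is, for $\norm[T]$ sufficiently large, more restrictive than the threshold $x \geq (1+\norm[T])\log(1+\norm[T])$ handled by the stationary-phase analysis in \cite[\S 3.3]{BFW21}, and that analysis already produces $\norm[x]^{-A}$ decay for any fixed $A$. The lemma's decay bound is therefore a direct consequence. The only real subtlety in the entire argument is the complementary-series case of non-negativity, where $A$ and $B$ individually become complex; the identity $B = \bar A$ handles this cleanly, and otherwise the proof is essentially bookkeeping around the explicit formulas.
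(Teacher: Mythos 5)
Your proposal is correct, and for the $M_{3,\infty}$ half it is genuinely more self-contained than the paper: the paper offers no argument at all here, deferring both this lemma and the next entirely to \cite[\S 5]{BFW21}, whereas you verify (1) and (2) directly from the explicit formula. Your handling of the complementary series via $B=\bar A$, together with $\cosh(\pi r)=\cos(\pi t)>0$ for $r=it$, $\norm[t]<\tfrac12$, is exactly the right check; two small caveats are worth recording. First, in part (2) you should state explicitly that $T-1\le\norm[r]\le T+1$ with $T\gg 1$ forces $r$ to be real (the complementary series has $\norm[t]<\tfrac12$), since your lower bound $A\ge\exp(-\tfrac1{2\Delta^2}-\tfrac{\pi r}2)$ tacitly uses $(r-T)^2\le 1$ with $r$ real. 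Second, the positivity in (1) is ultimately used to discard unwanted terms on the spectral side of the Motohashi formula \eqref{MF}, so ``all unitary $\varphi_\infty$'' should be understood to include every archimedean component that can occur there; your computation covers the spherical unitary dual, which is where the displayed formula literally applies, and the remaining cases are part of what the blanket citation to \cite{BFW21} carries in the paper as well. For the dual weight, note that the bound is not literally ``a direct consequence'' of the display preceding the lemma: that display gives $(1+\norm[T])^{-A}$ on the range $x\ge(1+\norm[T])\log(1+\norm[T])$, whereas the lemma asserts the stronger decay $\norm[x]^{-A}$ (stronger when $x$ is superpolynomial in $T$), so you do need, as you in fact say, that the stationary-phase analysis of \cite[\S 3.3]{BFW21} produces decay in $x$ itself rather than only in $T$; since the paper likewise just cites \cite{BFW21} for this point, your treatment is at the same level of rigor there, while supplying the elementary $M_{3,\infty}$ verification that the paper omits.
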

\begin{lemma}
	The corresponding weight $M_{3,\infty}(\Psi_{\infty} \mid \id, s)$ vanishes at $s=1/2$ to order one, and satisfies for any integer $n \geq 0$ and constant $A > 1$
	$$ \extnorm{ \left. \frac{\partial^n}{\partial s^n} M_{3,\infty}(\Psi_{\infty} \mid \id, s) \right|_{s=\frac{1}{2}} } \ll_{n,A} T^{-A}. $$
	The corresponding dual weight $M_{4,\infty}(\Psi_{\infty} \mid \chi_{\infty} \norm^s)$ has a singularity at $s=\pm 1/2$ of order $\leq 2$, and satisfies for any integer $n \geq 0$
	$$ \extnorm{ \left. \frac{\partial^n}{\partial s^n} \left( s \mp \tfrac{1}{2} \right)^2 M_{4,\infty}(\Psi_{\infty} \mid \id, s) \right|_{s=\pm \frac{1}{2}} } \ll_{\ep, n} T^{1+\ep}. $$
\label{DABd}
\end{lemma}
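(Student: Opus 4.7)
The plan is to reduce both statements to the explicit formulas already derived in this section, by treating the Eisenstein case as an analytic continuation of the cuspidal case. For $M_{3,\infty}(\Psi_\infty \mid \id, s)$, I would identify it with $M_{3,\infty}(\Psi_\infty \mid \varphi_\infty)$ for $\varphi_\infty \simeq \pi(\norm_\infty^s, \norm_\infty^{-s})$, which has spectral parameter $r = -is$, up to the standard proportionality factor between the Kirillov pairing and the induced-model pairing. That factor is meromorphic in $s$, built out of $L_\infty(1+2s,\id) L_\infty(1-2s,\id)$, and is analytic and non-vanishing in a neighbourhood of $s = 1/2$.

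With this reduction in hand, the explicit formula from the previous lemma applies. The factor $\cosh(\pi r)$ becomes $\cosh(-i\pi s) = \cos(\pi s)$, which vanishes to first order at $s = 1/2$; this yields the asserted order-one vanishing. For the rapid decay, substituting $r = -is$ and $\Delta = T^{\ep}$ in the Gaussian factors gives, uniformly for $s$ in a fixed complex neighbourhood of $1/2$,
\[
\bigl| \exp\bigl( -(r \pm T)^2 / (2\Delta^2) \bigr) \bigr| \ll \exp\bigl( -T^{2-2\ep}/2 \bigr),
\]
while the other prefactors are polynomially bounded. Cauchy's integral formula on a small disc around $s = 1/2$ then controls all $s$-derivatives by the supremum, yielding $\ll_{n,A} T^{-A}$.

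For $M_{4,\infty}(\Psi_\infty \mid \id, s)$, I would invoke the Mellin-theoretic analysis of the dual weight carried out in \cite[\S 3.3]{BFW21}. There the weight factors as a product of archimedean gamma functions and a Mellin transform of $\Psi_\infty$. The potential double pole at each of $s = \pm 1/2$ arises from the archimedean $L$-factors $L_\infty(\tfrac12+s,\id)^2 L_\infty(\tfrac12-s,\id)^2$ used to normalise the global $M_4(\Psi \mid \chi)$; this immediately gives a singularity of order at most two. To bound the leading Laurent coefficient by $T^{1+\ep}$, I would Taylor expand the regular factors around $s = \pm 1/2$ and invoke the explicit shape of $\Psi_\infty$: the relevant Mellin transform at a fixed argument is controlled by the spectral window of width $\Delta = T^\ep$ centred at $|r| \asymp T$, which is of size $T^{1+\ep}$. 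A final application of Cauchy's formula on a small disc gives the $n$-th derivative bound.

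The main obstacle is the dual-weight half. The first half reduces to an algebraic identification of the vanishing factor together with an elementary Gaussian tail estimate. By contrast, isolating the exact singularity order of $M_{4,\infty}$ and tracking the uniform $T$-dependence of each Laurent coefficient requires careful bookkeeping of the Mellin--Barnes contour integrals from \cite{BFW21}; one has to confirm that expanding around $s = \pm 1/2$ produces no spurious enhancement beyond the unavoidable $T^{1+\ep}$ coming from the scale of the spectral window.
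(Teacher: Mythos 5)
For this lemma the paper offers no argument of its own: both archimedean statements are delegated wholesale to \cite[\S 5]{BFW21}, where the test function $\Psi_\infty$ is constructed explicitly and the weights are computed. Your second half (the bound on $M_{4,\infty}$) is in effect the same deferral — you invoke the Mellin analysis of \cite[\S 3.3]{BFW21} and supply only a heuristic ("window of width $T^\ep$ at height $T$ gives $T^{1+\ep}$") for the size of the Laurent coefficients at $s=\pm\tfrac12$; that is consistent with what the paper does, but it is a citation plus plausibility, not a verification of the stated order of the singularity or of the uniform $T$-dependence of each coefficient.

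The genuine gap is in your first half. You derive the order-one vanishing of $M_{3,\infty}(\Psi_\infty\mid\id,s)$ at $s=\tfrac12$ purely from the factor $\cosh(\pi r)\rightsquigarrow\cos(\pi s)$ after setting $r=-is$, on the premise that the proportionality factor between the Kirillov-model pairing (used for $M_{3,\infty}(\Psi_\infty\mid\varphi_\infty)$) and the induced-model pairing (used, by the paper's definition, for $M_{3,\infty}(\Psi_\infty\mid\chi_\infty,s)$) is "analytic and non-vanishing in a neighbourhood of $s=\tfrac12$". But the very factor you name, built from $L_\infty(1+2s,\id)\,L_\infty(1-2s,\id)$, is not: $L_\infty(1-2s,\id)=\zeta_\infty(1-2s)=\pi^{-(1-2s)/2}\Gamma\left(\tfrac{1-2s}{2}\right)$ has a pole at $s=\tfrac12$. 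Depending on whether this combination enters the normalization in the numerator or the denominator, the zero of $\cos(\pi s)$ is either cancelled (no vanishing) or doubled (order two), so the asserted order-one vanishing — which is exactly the structural fact the lemma needs in order to tame the order-three pole of $\zeta(\tfrac12+s)^3$ in $M_3(\Psi\mid\id,s)$ when computing $DS(\Psi)$ — does not follow from your argument as written. Pinning down the exact normalization change between the two models at $s=\tfrac12$ (equivalently, redoing the local computation in the induced model, as in \cite[\S 5]{BFW21}) is the missing step; once that is in place, your Gaussian-decay and Cauchy-estimate argument for the $T^{-A}$ bound on the derivatives is fine, since for $s$ in a fixed neighbourhood of $\tfrac12$ the factors $\exp\bigl(-(r\mp T)^2/(2\Delta^2)\bigr)$ are indeed of size $\exp(-cT^{2-2\ep})$ and the normalization factor is independent of $T$.
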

\noindent For a proof of the above two lemmas, see \cite[\S 5]{BFW21}.

	Inserting Lemma \ref{WtLocBd} (2), Lemma \ref{DualWtLocBd} (2), Lemma \ref{DABd} into the formulas \eqref{DGDef} and \eqref{DSDef}, we deduce
	$$ \extnorm{DG(\Psi)} + \extnorm{DS(\Psi)} \ll_{\ep} T^{1+\ep} q^{-1+\ep}. $$
	Introduce the decomposition
	$$ N=N_0 N_1, $$
	so that $p \mid \mathrm{gcd}(q,N) \Leftrightarrow p \mid q_0 \Leftrightarrow p \mid N_0$, and $\mathrm{gcd}(N_0,N_1)=1$. Inserting all the above local bounds in the Motohashi's formula \eqref{MF}, we get
\begin{align*} 
	\sum_{\substack{\varphi \in \Bas_0(N) \\ r(\varphi) \in [T-\Delta, T+\Delta]}} \frac{L \left( \frac{1}{2},\varphi \times \chi_q \right)^3}{L(1, \varphi, \mathrm{Ad})} &\ll_{\ep} (T\norm[q]N)^{\ep} \mathrm{lcm}(\norm[q],N) \left( T + \frac{N_1^{\frac{1}{2}}}{\norm[q] N_1} \sum_{\chi} \int_{\norm[r] \leq T \log^2 T} \extnorm{L \left( \tfrac{1}{2}+ir,\chi \right)}^4 dr \right), 
\end{align*}
	where the sum over $\chi$ are those Dirichlet characters of conductor dividing $q$. A spectral large sieve inequality shows that the fourth moment is bounded as
	$$ \sum_{\chi} \int_{\norm[r] \leq T \log^2 T} \extnorm{L \left( \tfrac{1}{2}+ir,\chi \right)}^4 dr \ll_{\ep} (T\norm[q])^{1+\ep}. $$
	We deduce the desired bound in Theorem \ref{CubMomentBd}.

\section*{Acknowledgement}

Nickolas Andersen is supported by the Simons Foundation, award number 854098.
Han Wu is supported by the Leverhulme Trust Research Project Grant RPG-2018-401.

\bibliographystyle{acm}
	
\bibliography{mathbib}

\address{\quad \\ Nickolas Andersen \\ Brigham Young University \\ Department of Mathematics \\ Provo, UT 84604, USA \\ {\tt nick@math.byu.edu}}

\address{\quad \\ Han Wu \\ School of Mathematical Sciences \\ Queen Mary University of London \\ Mile End Road \\ E1 4NS, London, UK \\ {\tt wuhan1121@yahoo.com}}
	
\end{document}